\renewcommand{\baselinestretch}{1}
\definecolor{myblue}{rgb}{0.13,0.29,0.56}
\crefname{section}{\color{black} Section}{\color{black} Section}
\crefname{lemma}{Lemma}{Lemmas}
\crefname{prob}{Problem}{Problems}
\crefname{coro}{Corollary}{Corollaries}
\crefname{conj}{Conjecture}{Conjectures}
\crefname{prop}{Proposition}{Propositions}
\crefname{claim}{Claim}{Claims}
\crefname{defn}{Definition}{Definitions}
\crefname{rmk}{Remark}{Remarks}
\crefname{exm}{Example}{Examples}
\crefname{thm}{Theorem}{Theorems}
\def\br{\mathbb{R}}
\def\bn{\mathbb{N}}
\theoremstyle{plain}
\newtheorem{thm}{Theorem}[section]
\newtheorem{lemma}[thm]{Lemma}
\newtheorem{coro}[thm]{Corollary}
\theoremstyle{definition}
\newtheorem{rmk}[thm]{Remark}
\newcommand{\bremark}{\begin{remark} \em}
\newcommand{\eremark}{\end{remark} }
\newenvironment{prf}{{\noindent\bf Proof.}}{\hfill $\square$\par}
\numberwithin{equation}{section}
\begin{document}
\parindent 15pt
\renewcommand{\theequation}{\thesection.\arabic{equation}}
\renewcommand{\baselinestretch}{1.15}
\renewcommand{\arraystretch}{1.1}
\renewcommand{\vec}[1]{\bm{#1}}
\def\disp{\displaystyle}
\title{\bf\large Symmetry of Convex Solutions to Fully Nonlinear Elliptic Systems: Bounded Domains\thanks{Supported by National Natural Science Foundation of China (11771428, ????????)}

\author{{\small Weijun Zhang$^{1}$\thanks{Email: zhangweij3@mail.sysu.edu.cn}~~~Zhitao Zhang$^{2,3,4}$\thanks{Corresponding author. Email: zzt@math.ac.cn}}\\
{\small $^1$ School of Mathematics, Sun Yat-sen University, Guangzhou 510275, People's Republic of China,}\\
{\small $^2$ Academy of Mathematics and Systems Science,}\\
{\small Chinese Academy of Sciences, Beijing 100190, People's Republic of China,}\\
{\small $^3$ School of Mathematical Sciences, University of Chinese}\\
{\small Academy of Sciences, Beijing 100049, People's Republic of China,}\\
{\small $^4$ School of Mathematical Science, Jiangsu University, Zhenjiang 212013, People's Republic of China.}}
}

\date{}

\maketitle

\abstract
{\small In this paper, we are concerned with the monotonic and symmetric properties of convex solutions to fully nonlinear elliptic systems. We mainly discuss Monge-Amp\`ere type systems for instance, considering
	\begin{equation*}
		\det(D^2u^i)=f^i(x,{\bf u},\nabla u^i), \ 1\leq i\leq m,
	\end{equation*}
over bounded domains of various cases, including the bounded smooth simply connected domains and bounded tube shape domains in $\br^n$. We obtain monotonic and symmetric properties of the solutions to the problem with respect to the geometry of domains and the monotonic and symmetric properties of right-hand side terms. The proof is based on carefully using the moving plane method together with various maximum principles and Hopf's lemmas. The existence and uniqueness to an interesting example of such system is also discussed as an application of our results.
}
\vskip 0.2in
{\bf Key words:} Monge-Amp\`ere systems; Moving plane method; Symmetry.\\
\vskip 0.02in
{\bf AMS Subject Classification(2010):} 35J47, 35J60, 35B06.

\section{Introduction}
\noindent

In this paper, we consider the following Monge-Amp\`ere systems:
\begin{equation}\label{eq:MA}
\det(D^2 u^i(x))=f^i(x,{\bf u}(x),\nabla u^i(x)),\ in\ \Omega,\ 1\leq i\leq m,
\end{equation}
where $\Omega\subset\br^n$, ${\bf u}=(u^1,\dots,u^m)$, and ${\bf f}=(f^1,\dots,f^m)$, $n,m\in\bn^*$, satisfy some suitable conditions in different cases.

\subsection{Background}

The monotonicity and symmetry properties are very useful in the research on nonlinear partial differential equations and has attracted much attention in many areas of mathematics. A powerful tool for studying these is the method of moving plane, especially when the equations may have no variational structure, see \cite{chen_methods_2010, chen_moving_2003, gidas_symmetry_1979, gui_sphere_2018, zhang_variational_2013} for examples. The method of moving plane originally discovered by Alexandrov \cite{alexandrov_characteristic_1962}, and then have been deeply developed by Serrin \cite{serrin_symmetry_1971}, Gidas-Ni-Nirenberg\cite{gidas_symmetry_1979, gidas_symmetry_1981}.

After that, symmetry properties in the case of a single equation have received considerable investigation by various authors. For example, in the  1990s, Li established monotonicity and symmetry results of solutions to single fully nonlinear elliptic equations on bounded domains in \cite{li_monotonicity_1991} and unbounded domains in \cite{li_monotonicity_1991-1}, respectively. Zhang and Wang \cite{zhang_existence_2009} consider the Monge-Amp\`ere equation with exponential right-hand side term, which arising from the differential geometry problem, in arbitrary convex domains. 

However, to the best of our knowledge, the study for the case of systems are much less than scalar case. The first work dates back to Troy \cite{troy_symmetry_1981}. Later ones are De Figueiredo\cite{de_figueiredo_monotonicity_1994} and Busca \cite{busca_symmetry_2000}, in where symmetry results were obtained for elliptic systems in the general domains and the whole space, respectively. Recently, Ma and Liu \cite{ma_symmetry_2010,ma_symmetry_2010-1,liu_symmetry_2012,liu_symmetry_2013} treated different systems over various domains, among which \cite{ma_symmetry_2010} concerns the Monge-Amp\`ere systems arising from the differential geometry problem. It is of great interest to further investigate the symmetry properties of Monge-Amp\`ere systems based on the aforementioned works. 

The goal of this paper is to give a rather complete and general version of monotonicity and symmetry results to the Monge-Amp\`ere system over bounded domains of various cases, including the bounded smooth simply connected domains and bounded tubes shape domains in $\br^n$. The cases of unbounded domains can be seen here \cite{zhang2024_1}.

\subsection{Main Results}

The main results about symmetry are formally stated as below, in fact we get a more general results about monotonicity, more detail could be seen in \cref{sec:bdd,sec:bt}.

In order to state our main results, we need firstly introduce some basic hypotheses on$f^i:\overline{\Omega}\times\br^m\times\br^n\to\br, (x,{\bf z},p)\to f^i(x,{\bf z},p)$, where ${\bf z}=(z^1,\dots,z^m)$. We suppose that for all $1\leq i\leq m$, $f^i\in C(\overline{\Omega}\times\br^m\times\br^n,\br)$, furthermore satisfying some of the following in different situations.

In order to assure the ellipticity of \eqref{eq:MA}, we need the following two kinds of positive conditions:
\begin{enumerate}[resume,label=$(F_{\arabic{enumi}})$]
	\item \label{F0} $f^i(x,{\bf z},p)>0$, $\forall (x,{\bf z},p)\in (\Omega\times\br^m\times\br^n)$;

	\item \label{Fc} $f^i(x,{\bf z},p)\geq c_f>0$, $\forall (x,{\bf z},p)\in (\Omega\times\br^m\times\br^n)$;
\end{enumerate}

\begin{rmk}
	\ref{Fc}, which is in order to assure the uniformly ellipticity of \eqref{eq:MA}, is stronger than \ref{F0}, which is merely assure the ellipticity of \eqref{eq:MA}.
\end{rmk}

Next, when $\Omega$ assume to be convex in one direction, denote as ${\bf e_1}$, we can study whether the solutions to \eqref{eq:MA} will be having some monotonicity, hence we need the following monotonicity kind conditions on ${\bf f}$:

\begin{enumerate}[resume,label=$(F_{\arabic{enumi}})$]

\item \label{Fzi}$f^i=f^{i,1}+f^{i,2}$, where $f^{i,1}$ is locally uniformly Lipschitz continuous in the component $z^i$, and $f^{i,2}$ is non-increasing in $z^i$, whenever the remaining components $z^j$, $j\neq i$, and $x,p$ fixed;
	\item \label{Fzj}$f^i$ is non-increasing in $z^j$, $j\neq i$, whenever the remaining components $z^k$, $k\neq j$, and $x,p$ fixed;
	\item \label{Fp}$f^i$ is locally uniformly Lipschitz continuous in the component $p$, whenever $x,{\bf z}$ fixed;
        \item \label{Fanti}$f^i(y_1,x',{\bf z},\bar{p})\geq f^i(x,{\bf z},p),\ \forall\ {\bf z}\in \br^m, p\in\br^n$ and $x=(x_1,x')\in\Omega$ such that $p_1\leq 0, x_1\leq 0$ with $x_1\leq y_1 \leq -x_1$, where $\bar{p}:=(-p_1,p_2,\cdots,p_n)$;

\end{enumerate}

Furthermore, when $\Omega$ assume to be symmetric in ${\bf e_1}$, we can study whether the solutions to \cref{eq:MA} will be having some symmetry along ${\bf e_1}$, we need to strengthen \ref{Fanti}.

\begin{enumerate}[resume,label=$(F_{\arabic{enumi}})$]
	\item \label{Fsym1}$f^i(x,{\bf z},p)= f^i(|x_1|,x_2,\dots,x_n,{\bf z},|p_1|,p_2,\dots,p_n),\ \forall (x,{\bf z},p)\in (\Omega\times\br^m\times\br^n)$;
\end{enumerate}

At last, when $\Omega$ assume to be symmetric in all directions, we can study whether the solutions to \cref{eq:MA} will be radially symmetry, we need to strengthen \ref{Fsym1}.

\begin{enumerate}[resume,label=$(F_{\arabic{enumi}})$]
	\item \label{Fsymall} $f^i(x,{\bf z},p)=f^i(Ox,{\bf z},O'p),\ \forall\ O, O'\in O(n),\ \forall (x,{\bf z},p)\in(\Omega\times\br^m\times\br^n)$, where $O(n)$ is the n-th order orthogonal group;
	
\end{enumerate}

For the convenience, we denote \begin{equation}\label{eq:dij}
d_{ij}(x,{\bf z},p,h):=\left\{\begin{array}{rl}
\frac{1}{h}\left(f^i(x,{\bf z}+h{\bf e_j},p)-f^i(x,{\bf z},p)\right),&i\neq j, h\neq 0\\
\frac{1}{h}\left(f^{i,2}(x,{\bf z}+h{\bf e_i},p)-f^{i,2}(x,{\bf z},p)\right),&i=j, h\neq 0,\\
0,& h=0,\\
\end{array}
\right.
\end{equation}
and $D:=\begin{pmatrix}
d_{11}&\cdots&d_{1m}\\
\vdots&\ddots&\vdots\\
d_{m1}&\cdots&d_{mm}
\end{pmatrix}.$

\begin{rmk}\label{rmk:dij}
$\operatorname{sgn}(d_{ij}\cdot h)=-\operatorname{sgn}(h)$ and $d_{ij}\leq0, \ \forall\ i,j=1,\dots,m,$ by \ref{Fzi} and \ref{Fzj}, and all are locally bounded.
\end{rmk}

\vskip 0.1in
Now we begin to state our main results about symmetry.

\vskip 0.1in
For the case of bounded smooth simply connected domains, we mainly consider the following constant-boundary Dirichlet problem for \eqref{eq:MA},
\begin{equation}\label{eq:boundzerointro}
\left\{\begin{array}{rl}
\det(D^2 u^i,\nabla u^i))\ =&f^i(x,{\bf u},\nabla u^i),\ \text{in}\ \Omega,\\
	u^i\ =&c^i,\quad\ \quad\ \quad\ \quad\ \text{on}\ \partial\Omega,\ 1\leq i\leq m.\\
\end{array}
\right.
\end{equation}
where $c^i\in\br$ are given constants.

We have the main results as follow,
\begin{thm}
	Let $\Omega=B_R$ be a arbitrary ball with radius $R$. Assume ${\bf f}$ satisfy \ref{F0}, \ref{Fzi}, \ref{Fzj}, \ref{Fp}, \ref{Fsymall}. Let ${\bf u}=(u^1,\cdots,u^m)$ be a group of $[C^2(\overline{\Omega})]^m$ strictly convex solutions to \eqref{eq:boundzerointro}, then each $u^i$ must be radially symmetric and strictly increasing respect to the center of $B_R$. 
	
	More precisely, denote the center of $B_R$ as $x^*\in\br^n$, and $r=|x-x^*|$, then for $i=1\dots,m$, each $u^i$ must be
	\begin{equation*}
	u^i(x)=u^i(r),\ \forall\ x\in B_R(x^*),
	\end{equation*}
	moreover,
	\begin{equation*}
	\frac{d u^i}{dr}(x)>0,\ \forall\ x\in B_R(x^*),
	\end{equation*}
\end{thm}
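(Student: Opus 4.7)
The plan is to reduce the radial symmetry statement to the one-directional symmetry theorem established earlier in \cref{sec:bdd} and then rotate. After translating coordinates so that $x^*=0$, for any unit vector $e\in S^{n-1}$ I choose orthonormal coordinates with $e={\bf e_1}$. In these coordinates $B_R$ is a bounded smooth simply connected domain symmetric in the $x_1$-direction, and hypothesis \ref{Fsymall} specializes to \ref{Fsym1}. Thus the one-directional symmetry theorem of \cref{sec:bdd} applies and yields that each $u^i$ is symmetric across the hyperplane $\{x\cdot e=0\}$ and strictly monotone along the $e$-axis. Applying this for every $e\in S^{n-1}$ forces $u^i(x)=u^i(|x|)$ with $\frac{du^i}{dr}>0$ on $B_R\setminus\{0\}$.

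To recall the one-directional ingredient, the proof proceeds by the moving plane method. For $\lambda\in(-R,0]$ set $T_\lambda=\{x_1=\lambda\}$, $\Sigma_\lambda=B_R\cap\{x_1<\lambda\}$, $x^\lambda=(2\lambda-x_1,x')$, and $w^i(x)=u^i(x^\lambda)-u^i(x)$. Since each $u^i$ is strictly convex and $C^2$, the elementary identity
\begin{equation*}
\det A-\det B=\int_0^1\operatorname{tr}\bigl(\operatorname{cof}(tA+(1-t)B)\,(A-B)\bigr)\,dt
\end{equation*}
applied to $A=D^2u^i(x^\lambda)$ and $B=D^2u^i(x)$ linearizes the Monge-Amp\`ere operator as $\operatorname{tr}(M^i(x)\,D^2w^i(x))$ with $M^i$ uniformly positive definite on $\overline{\Sigma_\lambda}$. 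Combining with \ref{Fanti} (and \ref{Fsym1} at $\lambda=0$), \ref{Fzi}, \ref{Fzj}, \ref{Fp}, and the matrix $D$ from \eqref{eq:dij}, the vector $(w^1,\dots,w^m)$ satisfies a weakly coupled linear elliptic differential inequality which is cooperative in the sense of \cref{rmk:dij}, with $w^i\geq 0$ on $\partial\Sigma_\lambda$ and $w^i\equiv 0$ on $T_\lambda\cap\overline{B_R}$. Starting at $\lambda$ slightly larger than $-R$ — where $\Sigma_\lambda$ is a narrow strip and a narrow-domain maximum principle supplies $w^i\geq 0$ — I then slide $\lambda$ upward, using continuity, the strong maximum principle, and Hopf's lemma to exclude contact at each critical step. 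At $\lambda=0$, repeating the argument from the opposite side gives the reverse inequality, whence one-directional symmetry and strict monotonicity.

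The main obstacle is the system coupling. In the scalar case, once linearization produces $\operatorname{tr}(M\,D^2w)+b\cdot\nabla w+cw\geq 0$, classical maximum principles and Hopf's lemma close the argument; here, the cross-terms $d_{ij}w^j$ for $i\neq j$ must be controlled by a maximum principle for weakly coupled \emph{cooperative} systems, which is exactly what the sign condition $d_{ij}\leq 0$ in \cref{rmk:dij} supplies. The delicate step is showing that the admissible set $\{\lambda:w^i\geq 0\text{ on }\Sigma_\lambda\text{ for all }i\}$ is closed at a critical $\lambda$: one must rule out the case in which some component $w^i$ vanishes identically while others remain strictly positive, which requires a componentwise application of the strong maximum principle and Hopf's lemma in cooperative form. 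A secondary but easy-to-handle issue is preserving uniform ellipticity of $\operatorname{tr}(M^i D^2\cdot)$; strict convexity of $u^i$ on $\overline{B_R}$ bounds $\operatorname{cof}(D^2u^i)$ uniformly from below, so $M^i$ remains uniformly positive definite throughout the sliding.
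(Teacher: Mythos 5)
Your overall strategy is exactly the paper's: the radial statement is obtained there as an immediate corollary of the one-directional theorem (\cref{thm:bound} and its symmetric variant), applied in every direction through the center, with \ref{Fsymall} specializing to \ref{Fsym1} in each rotated frame; the linearization via the cofactor/mean-value identity, the cooperative structure from \cref{rmk:dij}, and the sliding of $\lambda$ reproduce the paper's \eqref{eq:mv}--\eqref{eq:EI}.

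Two concrete problems in your recapitulation of the one-directional ingredient. First, your signs are inconsistent: with $w^i(x)=u^i(x^\lambda)-u^i(x)$ and $\Sigma_\lambda=B_R\cap\{x_1<\lambda\}$, $\lambda\le 0$, the constant Dirichlet data give $u^i(x)=c^i$ and $u^i(x^\lambda)\le c^i$ for $x\in\partial\Sigma_\lambda\setminus T_\lambda$, so $w^i\le 0$ there, not $w^i\ge 0$ as you assert; the moving-plane argument must propagate $w^i\le 0$ (the paper's $U^i_\lambda<0$), which is what ultimately yields $\partial u^i/\partial x_1<0$ on $\{x_1<0\}$ and hence $du^i/dr>0$. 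As written, the boundary inequality is false and the maximum principle cannot be run in the direction you state. Second, your account of the closedness step omits the only genuinely delicate point: when the limit $x^*$ of the interior maximizers lands on $\partial\Omega\cap\overline{T_\Lambda}$, where $T_\Lambda$ meets the sphere, neither the interior strong maximum principle nor the standard Hopf lemma applies; the paper needs Serrin's corner lemma (\cref{lemmaS}) together with the structure \eqref{nablaonT}--\eqref{HessainonT} and an integration along segments parallel to ${\bf e_1}$ to reach a contradiction. Relatedly, the one-directional theorem you invoke is stated under \ref{Fc}, while the present corollary assumes only \ref{F0}; you should note, as the paper's remark does, that for a ball the orthogonal-contact position (II) occurs only at $\Lambda_0$ itself, so \ref{F0} suffices.
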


\vskip 0.1in
For the case of bounded tube shape domains, we mainly consider the following constant-boundary Dirichlet problem for \eqref{eq:MA}
\begin{equation}\label{eq:cylinderintro}
\left\{\begin{array}{rl}
\det(D^2 u^i)\ =&f^i(x,{\bf u},\nabla u^i),\ x\in\ C_H,\\
u^i\ =&c^i,\hspace{1.9cm}x\in\partial C_H,\  i=1,\dots,m.\\
\end{array}
\right.
\end{equation}
where $c^i\in\br$ are constants.

We have the main results as follow,
\begin{thm}
	Let $C_H=\Omega\times(-H, H)$ be a cylinder in $\br^n$, that is $\Omega=B_R$ be a arbitrary ball with radius $R$ in $\br^{n-1}$. Assume ${\bf f}$ satisfy \ref{Fc}, \ref{Fzi}, \ref{Fzj}, \ref{Fp}, \ref{Fsymall}. Let ${\bf u}=(u^1,\cdots,u^m)$ be a group of $[C^2(\overline{C_H})]^m$ strictly convex solutions to \eqref{eq:cylinderintro}, then each $u^i$ must be radially symmetric and strictly increasing respect to the axis crossing the center of $B_R$.
	
	More precisely, denote the center of $B_R$ as $x^*=\left((x^*)',x^*_n\right)\in\br^n$, and denote $r=|x'-(x^*)'|$, then for $i=1\dots,m$, each $u^i$ must be
	\begin{equation*}
	u^i(x)=u^i(r,x_n),\ x\in C_H,
	\end{equation*}
	moreover, 
	\begin{equation*}
	\frac{\partial u^i}{\partial r}(r,x_n)>0,\ x\in C_H.
	\end{equation*}
\end{thm}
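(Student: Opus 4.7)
The strategy is to deduce the full radial symmetry in $x'$ from the single-direction symmetry statement for tube-shaped domains proved in \cref{sec:bt}, exploiting the full orthogonal invariance in \ref{Fsymall}.

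After translating coordinates I would assume $(x^*)'=0$, so that $C_H=B_R\times(-H,H)$ is symmetric under every reflection across a hyperplane of the form $H_e:=\{x\in\mathbb{R}^n:x\cdot e=0\}$ with $e$ a unit vector in $\{e_n\}^\perp$. For any such $e$, one chooses an orthogonal frame in which $e$ plays the role of $e_1$; the full $O(n)$-invariance of $f^i$ in the pair $(x,p)$ given by \ref{Fsymall} then immediately implies the one-direction symmetry hypothesis \ref{Fsym1} relative to $e$. Combined with \ref{Fc}, \ref{Fzi}, \ref{Fzj}, \ref{Fp}, this puts all hypotheses of the tube-domain symmetry theorem of \cref{sec:bt} in force for the direction $e$.

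Applying that theorem for each such $e$, I expect to obtain that $u^i$ is symmetric under $x\mapsto x-2(x\cdot e)e$ and strictly increasing in $x\cdot e$ on the half where $x\cdot e>0$. Since reflections of this form (as $e$ varies over unit vectors perpendicular to $e_n$) generate $O(n-1)$ acting on the horizontal slice $\{x_n=\text{const}\}$, any two points $x,y$ with the same $x_n$ coordinate and $|x'|=|y'|$ are related by a composition of such reflections, hence $u^i(x)=u^i(y)$. This gives $u^i(x)=u^i(r,x_n)$ with $r=|x'|$. For the strict monotonicity, at any $(x',x_n)$ with $|x'|>0$ I take $e=x'/|x'|$; then $\partial_r u^i(r,x_n)$ equals the directional derivative of $u^i$ along $e$, which is strictly positive by the single-direction statement.

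The bulk of the technical work is therefore pushed into \cref{sec:bt} rather than the present argument: there one must set up the moving-plane comparison for the linearization of the Monge-Amp\`ere system, sign the coupling terms using \ref{Fzi}--\ref{Fzj} and the matrix $D$ defined in \eqref{eq:dij} (cf.\ \cref{rmk:dij}), and exploit the uniform ellipticity provided by \ref{Fc} together with the strict convexity of the $u^i$ in order to apply the cooperative maximum principle and Hopf's lemma at the critical plane position. The hard step is really the uniform-ellipticity-plus-Hopf input at that stage, not the direction-sweeping argument here; once the tube-domain theorem is available, the radial conclusion of the present statement follows directly by the argument above.
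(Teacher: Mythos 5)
Your proposal is correct and takes essentially the same route as the paper: the result is stated there as an immediate corollary of the tube-domain theorem of \cref{sec:bt}, obtained by applying it in every horizontal direction $e\perp{\bf e_n}$ (with \ref{Fsymall} supplying the one-direction symmetry hypothesis \ref{Fsym1} in each rotated frame) and using that these reflections generate the rotations of the cross-section. You have merely spelled out the direction-sweeping details that the paper leaves implicit.
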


We mainly follow the moving plane method with concrete procedures proposed by Busca \cite{busca_symmetry_2000} and developed by Ma-Liu \cite{ma_symmetry_2010}. More precisely, we always divide the proof into three steps for each case of domains: the first step is to choose the plane we are going to move and to show that the moving procedure can be started; the last two steps are to continuously move the plane toward right to its limit position. The maximum principle and Hopf's lemma are repeatedly used in these steps. 

With respect to the cases of bounded domains, we mainly improve the existing results by reducing the smoothness condition on the right-hand side $f^i$ from $C^1$ to Lipschitz continuous. The method is spiritually similar to \cite{santos_symmetry_2020}, where symmetry properties were obtained for positive solutions to certain fully nonlinear elliptic systems with $f^i$ being Lipschitz continuous.

The paper is organized as follows. In \cref{sec:pre}, we present some preliminary results for the moving plane method. \cref{sec:bdd} is devoted to the case of bounded smooth simply connected domains. \cref{sec:bt} is devoted to the case of bounded tube shape domains. At last, we apply our symmetry results to an interesting example in \cref{sec:app}.  

\section{Some Preliminaries}\label{sec:pre}
\noindent

Noting that, in what follows, we always consider the classical solutions to the problem, that is, the solutions being twice continuously differentiable up to the boundary. This is always the case if each $f^i(x,{\bf u}(x),\nabla u^i(x))$ is $C^\alpha(\overline{\Omega})$ as a function of $x$ by the standard regularity theory of Monge-Amp\`ere equation; see \cite{figalli_monge-ampere_2017,gilbarg_elliptic_2001,le_schauder_2017,cheng_regularity_1977}. And in order to assure the ellipticity of the equations, the solutions are always considered to be strictly convex.

\vskip 0.1in

Here are some notations preparing for the moving plane method.
Fixed a direction vector $\nu\in\br^n$ with $|\nu|=1$, and a real number $\lambda\in\br$, we defined the related half space
\begin{equation*}
	\Sigma_{\lambda,\nu}:=\{x\in\Omega\ |\ x\cdot\nu<\lambda\},
\end{equation*}
and the corresponding hyperplane
\begin{equation*}
	T_{\lambda,\nu}:=\{x\in\Omega\ |\ x\cdot\nu=\lambda\}.
\end{equation*}

Let $x_{\lambda,\nu}$ be the reflection of $x\in\overline{\Omega}$ through $T_{\lambda,\nu}$, that is 
\begin{equation*}
	x_{\lambda,\nu}:=x+2(\lambda-x\cdot\nu)\nu.
\end{equation*}

correspondingly, for any set $A\subset\br^n$, let $A^\nu_\lambda$ be the reflection through $T_{\lambda,\nu}$, that is
\begin{equation*}
	A^\nu_\lambda:=\{x_{\lambda,\nu}=x+2(\lambda-x\cdot\nu)\nu\ |\ x\in A\}.
\end{equation*}

We denote that for a invertible matrix $M$, $M^{jk}:=(M^{-1})_{jk}$, and for two matrices $M_1,M_2$, denoted the Frobenius inner product as  $$\langle M_1,M_2\rangle_F:=\sum\limits_{j,k=1}^n(M_1)_{jk}(M_2)_{jk}=\operatorname{tr}\left(M_1^TM_2\right),$$
especially, if one of them is symmetric, then $\langle M_1,M_2\rangle_F=\operatorname{tr}\left(M_1M_2\right)$.

For a function $u\in C^2(\overline{\Omega})$, we define the reflected function $u_{\lambda,\nu}(x)$ through $T_{\lambda,\nu}$ as follow,
\begin{equation*}
u_{\lambda,\nu}(x):=u(x_{\lambda,\nu})=u(x+2(\lambda-x\cdot\nu)\nu),
\end{equation*}
and we have some facts that
\begin{equation*}
\frac{\partial u_{\lambda,\nu}}{\partial x_j}(x)=\sum\limits_{i=1}^n\frac{\partial u}{\partial x_i}(x_{\lambda,\nu})(\delta_{ij}-2\nu_i\nu_j)=\nabla u(x_{\lambda,\nu})\cdot \mu^{\nu}_j,
\end{equation*}
where $\mu^{\nu}_j:=(-2\nu_1\nu_j,\cdots,1-2\nu_j^2,\cdots,-2\nu_n\nu_j),$
and 
\begin{equation*}
\frac{\partial^2 u_{\lambda,\nu}}{\partial x_k\partial x_j}(x)=\nabla (\frac{\partial u_{\lambda,\nu}}{\partial x_j}(x))\cdot \mu^{\nu}_j=\nabla (\nabla u(x_{\lambda,\nu})\cdot \mu^{\nu}_k)\cdot \mu^{\nu}_j=\langle D^2(x_{\lambda,\nu}),(\mu^{\nu}_k)^T\mu^{\nu}_j\rangle_F,
\end{equation*}
thus
\begin{equation*}
\nabla u_{\lambda,\nu}(x)=\nabla u(x_{\lambda,\nu})\cdot
\begin{pmatrix}
1-2\nu_1^2 & -2\nu_1\nu_2 & \cdots & -2\nu_1\nu_n \\
-2\nu_2\nu_1 & 1-2\nu_2^2 & \cdots & -2\nu_2\nu_n \\
\vdots & \vdots & \ddots & \vdots \\
-2\nu_n\nu_1 & -2\nu_n\nu_2 & \cdots & 1-2\nu_n^2
\end{pmatrix}
=\nabla u(x_{\lambda,\nu})\cdot(I-2\nu^T\nu),
\end{equation*}

And we at last define the difference function $U_\lambda(x)$,
\begin{equation*}
U_{\lambda,\nu}(x):=u_{\lambda,\nu}(x)-u(x).
\end{equation*}

Once if the domain is somehow convex in one direction, for example $\nu={\bf e_1}=(1,0,\dots,0)\in\br^n$, in this case, for shortly, we denote
\begin{align*}
	&T_\lambda:=T_{\lambda,e_1}=\{x\in\Omega\ |\ x_1=\lambda\},\\ &\Sigma_\lambda:=\Sigma_{\lambda,e_1}=\{x\in\Omega\ |\ x_1<\lambda\},\\ 
	&x_\lambda:=x_{\lambda,e_1}=(2\lambda-x_1,x'),\ where\  x'=(x_2,\dots,x_n)\in\br^{n-1}\\ &u_\lambda(x):=u_{\lambda,e_1}(x)=u(2\lambda-x_1,x'). 
\end{align*} 

We can easily see that $$\nabla u_\lambda(x)=\left(-\frac{\partial u}{\partial x_1}(x_\lambda),\frac{\partial u}{\partial x_2}(x_\lambda),\dots,\frac{\partial u}{\partial x_n}(x_\lambda)\right)=\nabla u(x_\lambda)\cdot \bar{D},$$ and the Hessian matrix of $u_\lambda$ is
\begin{equation*}
D^2u_\lambda(x)=
\begin{pmatrix}
	\frac{\partial^2 u}{\partial x_1^2}(x_\lambda) & -\frac{\partial^2 u}{\partial x_1\partial x_2}(x_\lambda) & \cdots & -\frac{\partial^2 u}{\partial x_1\partial x_n}(x_\lambda) \\
	-\frac{\partial^2 u}{\partial x_2\partial x_1}(x_\lambda) & \frac{\partial^2 u}{\partial x_2^2}(x_\lambda) & \cdots & \frac{\partial^2 u}{\partial x_2\partial x_n}(x_\lambda) \\
	\vdots & \vdots & \ddots & \vdots \\
	-\frac{\partial^2 u}{\partial x_n\partial x_1}(x_\lambda) & \frac{\partial^2 u}{\partial x_n\partial x_2}(x_\lambda) & \cdots & \frac{\partial^2 u}{\partial x_n\partial x_n}(x_\lambda)
\end{pmatrix}
=\bar{D}^TD^2u_(x_\lambda)\bar{D},
\end{equation*}
where $\bar{D}=\operatorname{diag}\{-1,1,\cdots,1\}$. Noting that $|\nabla u_\lambda(x)|=|\nabla u(x_\lambda)|$ and the eigenvalue of $D^2u_\lambda(x)$ are the same as $D^2u(x_\lambda)$, especially,
\begin{equation}\label{eq:det}
	\det(D^2u_\lambda(x))=\det(D^2u(x_\lambda)).
\end{equation}

And we define the difference function in direction ${\bf e_1}$, $$U_\lambda(x):=U_{\lambda,{\bf e_1}}(x)=u_\lambda(x)-u(x).$$

Noting that at the special case $x=x_\lambda$, that is, $x\in\overline{T_\lambda}$, we have the following useful results:
\begin{equation}\label{nablaonT}
	\nabla U_\lambda(x)=\left(-2\frac{\partial u}{\partial x_1}(x),0,\dots,0\right);
\end{equation}
\begin{equation}\label{HessainonT}
D^2U_\lambda(x)=
\begin{pmatrix}
0 & -2\frac{\partial^2 u}{\partial x_1\partial x_2}(x) & \cdots & -2\frac{\partial^2 u}{\partial x_1\partial x_n}(x) \\
-2\frac{\partial^2 u}{\partial x_2\partial x_1}(x) & 0 & \cdots & 0 \\
\vdots & \vdots & \ddots & \vdots \\
-2\frac{\partial^2 u}{\partial x_n\partial x_1}(x) & 0 & \cdots & 0
\end{pmatrix}
.
\end{equation}

\vskip 0.2in

Now we are ready to do some preliminary calculations for \eqref{eq:MA}. Firstly, we have \begin{equation}\label{eq:ddet}
	\frac{\partial}{\partial q_{ij}}\det(M)=\det(M)M^{ij}, ~~~\forall M\text{ being positive definite},
\end{equation}
then by the integral form of mean value theorem, we have

 \begin{equation}\label{eq:mv}
 \det\left(D^2u^i_\lambda(x)\right)-\det\left(D^2u^i(x)\right)=\langle {\bf A^i}(x),D^2U^i_\Lambda(x)\rangle_F=\operatorname{tr}\left({\bf A^i}(x)D^2U^i_\lambda(x)\right),
 \end{equation}
 where ${\bf A^i}(x):=(a^i_{jk}(x))_{j,k=1}^n$ with
 \begin{equation}\label{eq:aij}
 a^i_{jk}(x):=\int_0^1\det\left((1-t)D^2u^i_\lambda(x)+tD^2u^i(x)\right)\left((1-t)D^2u^i_\lambda(x)+tD^2u^i(x)\right)^{jk}dt.
 \end{equation}
 
 On the other hand,  $\forall\lambda<0$, $x\in\Sigma_\lambda$, we have $x_1<(x_\lambda)_1<-x_1$, hence by \eqref{eq:det} and \ref{Fanti}, $\forall x\in\Sigma_\lambda$ such that $\frac{\partial u^i}{\partial x_1}(x)\leq0$, we have
 \begin{equation}\label{eq:anti}
 \begin{split}
\det(D^2u^i_\lambda(x))&=\det(D^2u^i(x_\lambda))\\
 &=f^i(x_\lambda,{\bf u}(x_\lambda),\nabla u^i(x_\lambda))\\
 &=f^i(x_\lambda,{\bf u}_\lambda(x),\overline{\nabla u^i_\lambda(x)})\\
 &\geq f^i(x,{\bf u}_\lambda(x),\nabla u^i_\lambda(x)),
 \end{split}
 \end{equation}
 hence by \ref{Fzi},\ref{Fzj} and \ref{Fp}, we have 
 \begin{equation}\label{eq:split}
 \begin{split}
 &\det(D^2u^i_\lambda)-\det(D^2u^i)\\
 \geq& f^i(x,{\bf u}_\lambda,\nabla u^i_\lambda)-f^i(x,{\bf u},\nabla u^i)\\
 =&f^i(x,{\bf u}_\lambda,\nabla u^i_\lambda)-f^i(x,{\bf u},\nabla u^i_\lambda)+f^i(x,{\bf u},\nabla u^i_\lambda)-f^i(x,{\bf u},\nabla u^i)\\
 =&f^i(x,{\bf u}_\lambda,\nabla u^i_\lambda)-f^i(x,u^1_\lambda,\cdots,u^{m-1}_\lambda,u^m,\nabla u^i_\lambda)+\cdots\\
 &+f^i(x,u^1_\lambda,\cdots,u^i_\lambda,\cdots,u^m,\nabla u^i_\lambda)-f^i(x,u^1_\lambda,\cdots,u^i,\cdots,u^m,\nabla u^i_\lambda)+\cdots\\
 &+f^i(x,u^1_\lambda,u^2,\cdots,u^m,\nabla u^i_\lambda)-f^i(x,{\bf u},\nabla u^i_\lambda)\\
 &+f^i(x,{\bf u},\nabla u^i_\lambda)-f^i(x,{\bf u},\nabla u^i)\\
 =&f^i(x,{\bf u}_\lambda,\nabla u^i_\lambda)-f^i(x,u^1_\lambda,\cdots,u^{m-1}_\lambda,u^m,\nabla u^i_\lambda)+\cdots\\
 &+f^{i,1}(x,u^1_\lambda,\cdots,u^i_\lambda,\cdots,u^m,\nabla u^i_\lambda)-f^{i,1}(x,u^1_\lambda,\cdots,u^i,\cdots,u^m,\nabla u^i_\lambda)\\
 &+f^{i,2}(x,u^1_\lambda,\cdots,u^i_\lambda,\cdots,u^m,\nabla u^i_\lambda)-f^{i,2}(x,u^1_\lambda,\cdots,u^i,\cdots,u^m,\nabla u^i_\lambda)+\cdots\\
 &+f^i(x,u^1_\lambda,u^2,\cdots,u^m,\nabla u^i_\lambda)-f^i(x,{\bf u},\nabla u^i_\lambda)\\
 &+f^i(x,{\bf u},\nabla u^i_\lambda)-f^i(x,{\bf u},\nabla u^i)\\
 \geq& d_{im}(x,u^1_\lambda,\cdots,u^{m-1}_\lambda,u^m,\nabla u^i_\lambda,U^m_\lambda)U^m_\lambda+\cdots\\
 &-h_{f^i,z^i}U^i_\lambda+d_{ii}(x,u^1_\lambda,\cdots,u^i,\cdots,u^m,\nabla u^i_\lambda,U^i_\lambda)U^i_\lambda+\cdots\\
 &+d_{i1}(x,{\bf u},\nabla u^i_\lambda,U^1_\lambda)U^1_\lambda-h_{f^i,p}|\nabla U^i_\lambda|,
 \end{split}
 \end{equation}
 where $d_{ij}$ are defined as \eqref{eq:dij}, and $h_{f^i,z^i}$ is the Lipschitz constants of $f^{i,1}$ in \ref{Fzi}, $h_{f^i,p}$ is the Lipschitz constants of $f^i$ in \ref{Fp}.
 
 Next, combining \eqref{eq:mv} and \eqref{eq:split}, we can obtain an elliptic inequality of $U^i_\lambda$ in $\Sigma_\lambda$. $\forall x\in\Sigma_\lambda$ such that $\frac{\partial u^i}{\partial x_1}(x)\leq0$, we have
 \begin{equation}\label{eq:EI}
 \begin{split}
 \operatorname{tr}\left({\bf A^i}(x)D^2U^i_\Lambda(x)\right)+&{\bf B^i}(x)\cdot\nabla U^i_\lambda(x)+c^i(x)U^i_\lambda(x)\\
 \geq&\sum_{j=1}^m d_{ij}(x,u^1_\lambda,\cdots,u^j,\cdots,u^m,\nabla U^i_\lambda,U^j_\lambda)U^j_\lambda,
 \end{split}
 \end{equation}
 with ${\bf A^i}(x)$ defined as \eqref{eq:aij}, and 
\begin{equation}\label{eq:bi}
{\bf B^i(x)}:=\frac{h_{f^i,p}}{|\nabla U^i_\lambda(x))|}\chi_{\{|\nabla U^i_\lambda(x))|\neq0\}}\nabla U^i_\lambda(x),
\end{equation}
\begin{equation}\label{eq:ci}
c^i(x):=h_{f^i,z^i}.
\end{equation}

\vskip 0.2in

Next we give some lemmas here for convenience.
In the procedure of using moving plane methods, the following strong maximum principle and Hopf's Lemma will be crucial. The proof of it can be found in \cite{gilbarg_elliptic_2001}.
\begin{lemma}[Maximum Principle $\&$ Hopf's Lemma]\label{lem:SMP-HL}
Let $\Omega\in\br^n$ be a domain, $w\in C^2(\Omega)$ be a non-positive solution in $\Omega$ to the following elliptic inequality
    \begin{equation*}
    \operatorname{tr}\left(A(x)D^2w(x)\right)+{\bf B}(x)\cdot\nabla w(x)+c(x)w(x)\geq0,
    \end{equation*}
where $A(x):=(a_{ij}(x))_{i,j=1}^n$, ${\bf B}(x):=(b_i(x))_{i=1}^n$, and $a_{ij},b_i,c\in L_{\operatorname{loc}}^\infty(\Omega)$ with $A(x)$ being locally positive definite in $\Omega$. Then either $w\equiv 0$ or $w<0$ in $\Omega$.
	
	Moreover, if $w(x_0)<0$	for some $x_0\in\Omega$, and $w(\bar{x})=0$ for some $\bar{x}\in\partial\Omega$, near which $w$ is continuously differentiable, then	
	$$\frac{\partial w}{\partial \nu}(\bar{x})>0,$$	
	where $\nu$ is the unit outer normal vector of $\partial\Omega$.
\end{lemma}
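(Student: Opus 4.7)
The plan is to reduce the statement to the textbook strong maximum principle and Hopf lemma for linear elliptic operators whose zeroth-order coefficient has a definite sign (see Gilbarg--Trudinger, Chapter~3). The subtle point is that $c(x)$ here is not assumed to have any sign, so those theorems do not apply directly. To repair this, I would exploit the one-sided hypothesis $w\le 0$: decomposing $c=c^+-c^-$ with $c^\pm\ge 0$, the inequality $Lw\ge 0$ rearranges to
\[
\operatorname{tr}(A(x)D^2w)+B(x)\cdot\nabla w - c^-(x)\,w \;\ge\; -c^+(x)\,w \;\ge\; 0,
\]
since $c^+\ge 0$ and $w\le 0$ force $-c^+w\ge 0$. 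Thus $w$ is a subsolution of the modified operator $L':=\operatorname{tr}(A\,D^2\,\cdot)+B\cdot\nabla-c^-$, whose zeroth-order coefficient $-c^-$ is non-positive; the principal part and drift retain local boundedness, and $A$ is still locally positive definite.

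For the dichotomy, I would suppose $w\not\equiv 0$ and assume for contradiction that $w(x_0)=0$ at some interior point $x_0$. Since $w\le 0$ everywhere, this $x_0$ is a non-negative interior maximum of $w$. Applying the classical strong maximum principle for $L'$ on a small precompact neighborhood of $x_0$ (where $A$ is uniformly elliptic and all coefficients are bounded) forces $w$ to be constant on the connected component of $\Omega$ containing $x_0$; because $\Omega$ is a domain, and hence connected, $w\equiv 0$, contradicting the hypothesis. Therefore $w<0$ throughout $\Omega$.

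The Hopf conclusion then follows from the standard barrier argument. I would place an interior ball $B_R(y)\subset\Omega$ tangent to $\partial\Omega$ at $\bar x$ (available wherever an interior-sphere condition holds, which is built into the local continuous differentiability of $w$ near $\bar x$), introduce the usual comparison function $h(x):=e^{-\alpha|x-y|^2}-e^{-\alpha R^2}$, and pick $\alpha$ sufficiently large so that $L'h\ge 0$ on the annulus $B_R(y)\setminus\overline{B_{R/2}(y)}$; the threshold depends only on the local ellipticity constant of $A$ and on the local sup-norms of $B$ and $c^-$. Since $w<0$ on $\overline{B_{R/2}(y)}$ by the first part, a sufficiently small $\varepsilon>0$ makes $w+\varepsilon h\le 0$ on $\partial B_{R/2}(y)$; on $\partial B_R(y)$ this is automatic because $h\equiv 0$ there and $w\le 0$. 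The weak maximum principle for $L'$ then yields $w+\varepsilon h\le 0$ throughout the annulus, and evaluating the outer normal derivative at $\bar x$ gives
\[
\frac{\partial w}{\partial\nu}(\bar x)\;\ge\;-\varepsilon\,\frac{\partial h}{\partial\nu}(\bar x)\;>\;0,
\]
since $\partial h/\partial\nu(\bar x)<0$ by direct computation.

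The main obstacle is really only the missing sign on $c$; once the $c=c^+-c^-$ reduction above is in place, the rest is a routine application of the classical Gilbarg--Trudinger theorems together with the exponential barrier $h$, and both steps go through under the stated hypotheses of local boundedness of $a_{ij},b_i,c$ and local positive-definiteness of $A$.
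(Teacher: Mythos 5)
Your proof is correct and follows essentially the same route as the paper, which gives no argument of its own and simply cites Gilbarg--Trudinger: the decomposition $c=c^+-c^-$ combined with $w\le 0$ is exactly the standard device by which the classical strong maximum principle and Hopf lemma (with $c\le 0$) are extended to sign-indefinite $c$. One small inaccuracy: the interior-ball condition at $\bar x$ needed for your barrier argument is a geometric hypothesis on $\partial\Omega$ and does \emph{not} follow from the continuous differentiability of $w$ near $\bar x$; the lemma's statement tacitly assumes it, and the paper invokes Serrin's corner lemma precisely at the boundary points where it fails.
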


\vskip 0.2in

In our case, since the domain we dealing with may not satisfy the interior ball condition, we will use the boundary point Hopf lemma at a corner instead, which is the content of the following lemma in \cite{gidas_symmetry_1979} (due to Serrin \cite{serrin_symmetry_1971}).
\begin{lemma}[Serrin's Corner Lemma]\label{lemmaS}
	Let $\Omega$ be a domain in $\br^n$ with the origin $Q$ on its boundary. Assume that near $Q$ the boundary consists of two transversally intersecting $C^2$ hypersurfaces $\{\rho=0\}$ and $\{\sigma=0\}$. Suppose $\rho,\sigma<0$ in $\Omega$. Let $w$ be a negative function in $C^2(\overline{\Omega})$, with $w<0$ in $\Omega$, $w(Q)=0$, satisfying the differential inequality
	\begin{equation*}
	a_{ij}w_{x_ix_j}+b_i(x)w_{x_i}+c(x)w\geq 0\text{ in }\Omega,
	\end{equation*}
	with uniformly bounded coefficients satisfying $a_{ij}\xi_i\xi_j\geq c_0|\xi|^2$. Assume 
	\begin{equation}\label{eq:SL}
	a_{ij}\rho_{x_i}\sigma_{x_j}\geq 0\text{ at }Q.
	\end{equation}
	If this is an equality, assume furthermore that $a_{ij}\in C^2$ in $\overline{\Omega}$ near $Q$, and that 
	\begin{equation*}
	D(a_{ij}\rho_{x_i}\sigma_{x_j})= 0\text{ at }Q,
	\end{equation*}
	for any first order derivative $D$ at $Q$ tangent to the submanifold $\{\rho=0\}\cap\{\sigma=0\}$. Then, for any direction $s$ at $Q$ which enters $\Omega$ transversally to each hypersurface,
	\begin{align*}
	&\frac{\partial w}{\partial s}<0\text{ at $Q$ in case of strict inequality in \eqref{eq:SL},}\\
	&\frac{\partial w}{\partial s}<0\text{ or }\frac{\partial^2 w}{\partial s^2}<0\text{ at $Q$ in case of equality in \eqref{eq:SL}.}
	\end{align*}
\end{lemma}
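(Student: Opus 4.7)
The strategy is to prove the lemma by barrier methods: construct an auxiliary function $h$ in a neighborhood of $Q$ whose comparison with $w$, via the strong maximum principle of \cref{lem:SMP-HL}, forces the desired sign on $\partial w/\partial s$ or $\partial^2 w/\partial s^2$ at $Q$. The two defining functions $\rho$ and $\sigma$ are the natural building blocks, since they vanish on the corresponding boundary pieces and so encode the corner geometry directly.

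The first step is to record the geometric facts driving the construction. Since $\rho,\sigma<0$ in $\Omega$ and $s$ enters $\Omega$ transversally to both hypersurfaces, one has $\partial_s\rho(Q),\partial_s\sigma(Q)<0$, and in particular $\nabla\rho(Q)$ and $\nabla\sigma(Q)$ are linearly independent. A direct computation then yields $L(\rho\sigma)|_Q=2a_{ij}(Q)\rho_{x_i}(Q)\sigma_{x_j}(Q)$, which is the precise quantity whose sign is controlled by hypothesis \eqref{eq:SL}.

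For the strict inequality case, I would seek $h$ of the form $h=-\alpha(\rho+\sigma)+K\rho\sigma$ with constants $\alpha,K>0$ chosen so that (i) $h>0$ in $\Omega$ near $Q$, (ii) $Lh\geq 0$ in a neighborhood of $Q$, and (iii) $\partial_s h(Q)=-\alpha(\partial_s\rho(Q)+\partial_s\sigma(Q))>0$. Condition (ii) follows because the dominant contribution to $Lh$ at $Q$ comes from the term $2K\cdot a_{ij}\rho_{x_i}\sigma_{x_j}|_Q>0$; choosing $K$ large and $\alpha$ accordingly, continuity then gives $Lh>0$ throughout a small neighborhood. The comparison is performed on a wedge-shaped subregion $W\subset\Omega\cap B_r(Q)$ whose boundary consists of a spherical cap, on which $w<0$ strictly and one can dominate $\epsilon h$ by shrinking $\epsilon$, together with parts of $\{\rho=0\}$ and $\{\sigma=0\}$ near $Q$; the wedge is arranged so that the latter pieces stay in the interior of $\Omega$ away from $Q$, allowing $w+\epsilon h\leq 0$ on them as well. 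After absorbing the zero-order term $c$ into the sign of $v:=w+\epsilon h\leq 0$ as in the proof of \cref{lem:SMP-HL}, the strong maximum principle gives $v\leq 0$ in $W$; differentiating at $Q$ along $s$ then produces $\partial_s w(Q)\leq -\epsilon\,\partial_s h(Q)<0$.

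The equality case is the main obstacle. Here $L(\rho\sigma)(Q)=0$, so the dominant contribution to $Lh$ at $Q$ disappears and the previous barrier no longer works. One must perform a Taylor expansion of $a_{ij}\rho_{x_i}\sigma_{x_j}$ to second order, using the additional hypothesis that its tangential first derivatives along the corner submanifold $\{\rho=0\}\cap\{\sigma=0\}$ all vanish at $Q$; this is precisely what forces the expansion to stay nonnegative in a full neighborhood of $Q$ rather than merely at the point itself. A refined barrier combining $\rho\sigma$ with higher-order terms such as $\rho^{2}+\sigma^{2}$ (or $(\rho+\sigma)^{2}$) then produces the weaker comparison $w(Q+ts)\leq -Ct^{2}+o(t^{2})$, from which a second-order Taylor expansion of $w$ in the direction $s$ yields the dichotomy $\partial_s w(Q)<0$ or $\partial^2 w/\partial s^2(Q)<0$. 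The technical heart of the proof is simultaneously reconciling the positivity of $h$ in $\Omega$, the nonnegativity of $Lh$ in a neighborhood, the vanishing of $h$ to the correct order at $Q$, and the comparison on a region with controllable boundary; the $C^{2}$ and tangential-derivative hypotheses on $a_{ij}\rho_{x_i}\sigma_{x_j}$ are tailored exactly to make this balancing possible.
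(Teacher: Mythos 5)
The paper does not prove \cref{lemmaS} at all; it is quoted verbatim from Gidas--Ni--Nirenberg and attributed to Serrin, so there is no internal proof to compare against and your proposal must stand on its own. As a sketch of the classical barrier strategy it is pointed in the right direction, but it has a genuine gap already in the strict-inequality case: with $h=-\alpha(\rho+\sigma)+K\rho\sigma$ you have $h=-\alpha\sigma\geq 0$ on the wall $\{\rho=0\}$ (and $h=-\alpha\rho\geq0$ on $\{\sigma=0\}$), so the required boundary inequality $w+\epsilon h\leq 0$ on the lateral pieces of the wedge is not automatic from $w\leq 0$; it fails outright whenever $w$ vanishes on a wall, which is exactly the situation in this paper's application of the lemma, where $w=U^i_\Lambda\equiv 0$ on $T_\Lambda$. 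Your fallback --- pushing the lateral boundary of the wedge strictly inside $\Omega$ --- is circular: those lateral pieces necessarily approach $Q$, where $w(Q)=0$, so $w$ admits no uniform negative bound on them and no single $\epsilon>0$ makes $w+\epsilon h\leq 0$ there unless you already know a Hopf-type decay rate for $w$ at $Q$, which is essentially the content of the lemma. The classical proofs avoid this by using the actual walls $\{\rho=0\},\{\sigma=0\}$ as the comparison boundary and choosing a barrier that is nonpositive on both walls (built from $\rho\sigma$, which vanishes there, plus corrections such as $-(\rho^2+\sigma^2)$ that are nonpositive there); note that any such admissible barrier has vanishing first-order part at $Q$ along $s$, so your claim that the strict case yields $\partial_s w(Q)<0$ by reading off $\partial_s h(Q)>0$ does not survive the correction of the barrier and needs a separate argument. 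A further, smaller, issue: in the strict case the coefficients are only assumed bounded, so ``continuity then gives $Lh>0$ throughout a small neighborhood'' is not justified as written.

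The equality case, which you correctly identify as the technical heart, is not actually proved: you describe what a refined barrier should accomplish (second-order Taylor expansion of $a_{ij}\rho_{x_i}\sigma_{x_j}$ using the tangential-derivative hypothesis, a comparison of the form $w(Q+ts)\leq -Ct^2+o(t^2)$), but you neither construct the barrier nor verify the three competing requirements (sign on the walls, $Lh\geq 0$ near $Q$ after the $c^-$ reduction, and strict second-order growth along $s$). As it stands this is a statement of intent rather than a proof. Since the paper simply cites Serrin and Gidas--Ni--Nirenberg for this lemma, the honest options are either to do the same, or to carry out the corner-barrier construction in full --- including the wall-sign issue above and the quadratic expansion in the equality case --- rather than in outline.
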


\vskip 0.2in

\section{Bounded Smooth Simply Connected Domains}\label{sec:bdd}
\noindent

In this case, we mainly consider the following constant-boundary Dirichlet problem for \eqref{eq:MA},
\begin{equation}\label{eq:boundzero}
\left\{\begin{array}{rl}
\det(D^2 u^i,\nabla u^i))\ =&f^i(x,{\bf u},\nabla u^i),\ \text{in}\ \Omega,\\
	u^i\ =&c^i,\quad\ \quad\ \quad\ \quad\ \text{on}\ \partial\Omega,\ 1\leq i\leq m.\\
\end{array}
\right.
\end{equation}
where $c^i\in\br$ are given constants.

\vskip 0.2in

In fact, we can consider a more general case of boundary condition:
\begin{equation}\label{eq:bddbc1}
u^i(y)>u^i(x),\ \forall x\in\Omega, y\in\partial\Omega,\ \text{with}\ y_1<x_1,
\end{equation}
\begin{equation}\label{eq:bddbc2}
u^i(y)\geq u^i(x),\ \forall x,y\in\partial\Omega,\ \text{with}\ y_1<x_1.
\end{equation}
and we deal with the following problem,
\begin{equation}\label{eq:bound}
\left\{\begin{array}{rl}
\det(D^2 u^i,\nabla u^i))\ =&f^i(x,{\bf u},\nabla u^i),\ \text{in}\ \Omega,\\
u^i\ \text{satisfies \eqref{eq:bddbc1} and \eqref{eq:bddbc2}},&1\leq i\leq m.\\
\end{array}
\right.
\end{equation}

\vskip 0.2in

\begin{rmk}\label{rmk:basicineq}
Let ${\bf u}=(u^1,\cdots,u^m)$ be a group of $[C^2(\overline{\Omega})]^m$ solutions of \eqref{eq:bound}, for each $i=1,\dots,m$, fixed $x\in \Omega$, we denote $\rho^i(x)=(\rho^i_1(x),\cdots,\rho^i_n(x))$ as the $n$ eigenvalues of $D^2u^i(x)$, by arithmetic-geometric mean inequality we have
\begin{equation*}
\left(\det D^2u^i(x)\right)^{\frac 1n}=\left(\prod_{j=1}^n\rho^i_j(x)\right)^{\frac 1n}\leq\frac 1n\sum_{j=1}^n\rho^i_j(x)=\frac 1n\Delta u^i(x),
\end{equation*}
then by \ref{F0}, we have
\begin{equation*}
\left\{\begin{array}{rl}
\Delta u^i>0,&\ \text{in}\ \Omega,\\
u^i-\sup\limits_{x\in\partial\Omega}u^i(x)\leq0,&\ \text{on}\ \partial\Omega,\ 1\leq i\leq m.\\
\end{array}
\right.
\end{equation*}
hence by the standard strong maximum principle and Hopf lemma, we have $$u^i<\sup\limits_{x\in\partial\Omega}u^i(x)\ \text{in}\ \Omega,\ \forall\ 1\leq i\leq m,$$
and 
$$\frac{\partial u^i}{\partial \nu}>0\ \text{on}\ \partial\Omega,\ \forall\ 1\leq i\leq m.$$
In particular, we can see the Dirichlet problem \eqref{eq:boundzero} is in the case of \eqref{eq:bound}.
\end{rmk}

\vskip 0.2in

\begin{figure}[ht]
	\centering
	\includegraphics[width=2.5in]{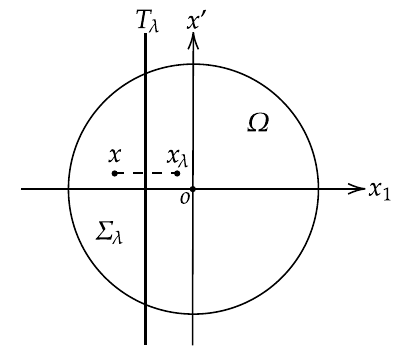}
	\caption{The bounded domain}
	\label{fig:bound}
\end{figure} 

We have some discussion on the geometry of $\Omega$.

When we begin to move the hyperplane $T_\lambda$ along a direction, assumed to be ${\bf e_1}$, from left negative infinity to the left hand side boundary of $\Omega$, denote $\lambda_0:=\inf\limits_{x\in\Omega}x_1$, then the set of firstly touching points of $\{x_1=\lambda\}$ with $\partial\Omega$ is denoted as $\partial_0\Omega:=\{x_1=\lambda_0\}\cap\partial\Omega$. Denote $$\Lambda_0:=\sup\{\lambda>R_0\ |\ \Sigma_\lambda^\lambda\subset\Omega\}.$$ 
$\Lambda_0$ is well-defined due to the regularity of $\Omega$, and when we increase the value of $\lambda$, $\Sigma_\lambda$ will finally reaches at least one of the following two cases:
\begin{enumerate}
	\renewcommand{\theenumi}{(\Roman{enumi})}
	\renewcommand{\labelenumi}{\theenumi}
	\item  $\Sigma_\lambda^\lambda$ becomes internally tangent to $\partial\Omega$ at some point which is not on $\{x_1=\lambda\}$;
	\item $\{x_1=\lambda\}$ reaches a position where it is orthogonal to $\partial\Omega$ at some point.
\end{enumerate}
And we define two more critical value of $\lambda$:
$$\Lambda_1:=\sup\{\lambda>R_0\ |\ \Sigma_\mu \ \text{doesn't reach position (I)},\ \forall \mu\in(R_0,\lambda)\},$$
$$\Lambda_2:=\sup\{\lambda>R_0\ |\ \Sigma_\mu \ \text{doesn't reach position (II)},\ \forall \mu\in(R_0,\lambda)\}.$$
\begin{rmk}
	In general $\Lambda_2\leq\Lambda_0$, while $\Lambda_1$ is irrelevant to $\Lambda_0$.
\end{rmk}

Before stating our main theorem, we prove here the strong maximum principle and Hopf lemma for the fully nonlinear elliptic systems with suitable boundary conditions over $\Sigma_\lambda$, for all $\lambda<\min\{\Lambda_0,\Lambda_1,\Lambda_2\}$:

\begin{lemma}\label{lem:SMPFMA}
	Assume ${\bf f}$ satisfy \ref{F0}, \ref{Fzi}, \ref{Fzj}, \ref{Fp}, \ref{Fanti}, let ${\bf u}$ be $[C^2(\Omega)]^m$ strictly convex solutions to \eqref{eq:bound}, $\lambda<\min\{\Lambda_0,\Lambda_1,\Lambda_2\}$. 
 
 If $$U^i_\lambda\leq0, \frac{\partial u^i}{\partial x_1}\leq0\ \text{in}\ \Sigma_\lambda,\ \text{for all}\ i=1,\dots,m,$$ then $$U^i_\lambda<0\ \text{in}\ \Sigma_\lambda,\ \text{for all}\ i=1,\dots,m,$$ and $$\frac{\partial U^i_\lambda}{\partial x_1}>0,\ \frac{\partial u^i}{\partial x_1}<0\ \text{on}\ T_\lambda\ \text{for all}\ i=1,\dots,m.$$
\end{lemma}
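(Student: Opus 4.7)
The plan is to decouple the system-level elliptic inequality \eqref{eq:EI} into a scalar elliptic inequality for each $U^i_\lambda$ individually, and then invoke the scalar strong maximum principle and Hopf's lemma from \Cref{lem:SMP-HL} on $\Sigma_\lambda$. Because the hypothesis $\partial u^i/\partial x_1\le 0$ holds throughout $\Sigma_\lambda$ for every $i$, the inequality \eqref{eq:EI} is valid pointwise on $\Sigma_\lambda$. I would move the coupling sum to the left and peel off its diagonal term, obtaining
\begin{equation*}
\operatorname{tr}\!\bigl({\bf A^i}D^2U^i_\lambda\bigr)+{\bf B^i}\!\cdot\!\nabla U^i_\lambda+\bigl(c^i-d_{ii}\bigr)U^i_\lambda\;\ge\;\sum_{j\ne i}d_{ij}U^j_\lambda.
\end{equation*}
By \Cref{rmk:dij} every $d_{ij}\le 0$, and $U^j_\lambda\le 0$ by assumption, so each summand on the right is non-negative. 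Discarding these non-negative terms preserves the inequality from below and yields the purely scalar elliptic inequality
\begin{equation*}
\operatorname{tr}\!\bigl({\bf A^i}(x)D^2U^i_\lambda(x)\bigr)+{\bf B^i}(x)\!\cdot\!\nabla U^i_\lambda(x)+\bigl(c^i(x)-d_{ii}(x,\cdots)\bigr)U^i_\lambda(x)\;\ge\;0\quad\text{on }\Sigma_\lambda,
\end{equation*}
to which \Cref{lem:SMP-HL} can be applied once I verify the ellipticity and local boundedness of the coefficients.

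Ellipticity of ${\bf A^i}$ from \eqref{eq:aij} follows from strict convexity: each $(1-t)D^2u^i_\lambda(x)+tD^2u^i(x)$ is positive definite with positive determinant on $\overline{\Sigma_\lambda}$, so its inverse is positive definite, multiplication by the positive scalar $\det$ preserves this, and integrating a continuous family of positive-definite matrices keeps ${\bf A^i}$ locally uniformly positive definite; the entries of ${\bf B^i}$ and the coefficient $c^i-d_{ii}$ are in $L^\infty_{\rm loc}$ by continuity together with the local Lipschitz assumptions \ref{Fzi}, \ref{Fp}. Thus \Cref{lem:SMP-HL} leaves the dichotomy $U^i_\lambda\equiv 0$ or $U^i_\lambda<0$ in $\Sigma_\lambda$ for each $i$. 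To rule out the trivial case, I would use the boundary behavior: for any $y\in\partial\Omega$ with $y_1<\lambda$, the reflected point $y_\lambda=(2\lambda-y_1,y')$ lies in the \emph{interior} of $\Omega$, since $\lambda<\Lambda_0$ places $y_\lambda\in\overline\Omega$ and $\lambda<\Lambda_1$ prevents $y_\lambda$ from landing on $\partial\Omega$ away from $T_\lambda$; then \eqref{eq:bddbc1} yields $u^i(y)>u^i(y_\lambda)$, so $U^i_\lambda(y)<0$, contradicting $U^i_\lambda\equiv 0$ by continuity. Hence $U^i_\lambda<0$ strictly in $\Sigma_\lambda$.

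For the boundary statements on $T_\lambda$, I would fix an arbitrary $x_0\in T_\lambda\cap\Omega$: near $x_0$ the boundary $\partial\Sigma_\lambda$ is a smooth piece of $T_\lambda$ with outward unit normal $e_1$, and $U^i_\lambda(x_0)=0$. The Hopf clause of \Cref{lem:SMP-HL} then delivers $\partial U^i_\lambda/\partial x_1(x_0)>0$, and combining with the identity $\partial U^i_\lambda/\partial x_1=-2\,\partial u^i/\partial x_1$ on $T_\lambda$ from \eqref{nablaonT} gives $\partial u^i/\partial x_1(x_0)<0$, closing all three conclusions.

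The only substantive obstacle is the ellipticity verification of ${\bf A^i}$, which rests crucially on the strict convexity hypothesis: if it failed somewhere in $\overline{\Sigma_\lambda}$ the decoupled inequality would degenerate and \Cref{lem:SMP-HL} could not be invoked. The geometric bound $\lambda<\min\{\Lambda_0,\Lambda_1,\Lambda_2\}$ enters only when excluding $U^i_\lambda\equiv 0$; note that $\Lambda_2$ does not play a role at this stage, but will become indispensable later when moving $\lambda$ through boundary corners requires the Serrin-type \Cref{lemmaS}.
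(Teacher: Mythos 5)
Your proposal is correct and follows essentially the same route as the paper: decouple \eqref{eq:EI} into the scalar inequality \eqref{eq:EIB} using $d_{ij}\le 0$ and $U^j_\lambda\le 0$, verify ellipticity of ${\bf A^i}$ from strict convexity and local boundedness of the lower-order coefficients, apply \cref{lem:SMP-HL}, exclude $U^i_\lambda\equiv 0$ via \eqref{eq:bddbc1} and $\lambda<\min\{\Lambda_0,\Lambda_1,\Lambda_2\}$, and conclude on $T_\lambda$ with Hopf's lemma and \eqref{nablaonT}. The only cosmetic difference is that you retain the diagonal term $d_{ii}U^i_\lambda$ in the zeroth-order coefficient while the paper discards the entire coupling sum at once; both are valid since all $d_{ij}$ are locally bounded.
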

\begin{prf}
    Follows in \eqref{eq:EI}, due to \cref{rmk:dij} and  $U^i_\lambda|_{\Sigma_\lambda}\leq0$, we can obtain an elliptic inequality of $U^i_\lambda$ in $\Sigma_\lambda$: 
    \begin{equation}\label{eq:EIB}
	\operatorname{tr}\left({\bf A^i}(x)D^2U^i_\lambda(x)\right)+{\bf B^i}(x)\cdot\nabla U^i_\lambda(x)+c^i(x)U^i_\lambda(x)\geq0,
    \end{equation}
    where ${\bf A^i}(x)$ as in \eqref{eq:aij} are locally positive definite due to the strictly convexity of $u^i$ and \ref{F0}, and together with ${\bf B^i(x)}$, $c^i(x)$ as in \eqref{eq:bi}, \eqref{eq:ci} are all locally bounded due to the twice differentiable continuity of $u^i$. Hence by \cref{lem:SMP-HL}, we have either $U^i_\lambda<0$ or $U^i_\lambda\equiv0$ in $\Sigma_\lambda$, it's easy to see that the latter will not happen due to $U^i_\lambda|_{\partial\Sigma_\lambda\setminus\overline{T_\lambda}}<0$ by the boundary condition \eqref{eq:bddbc1} and $\lambda<\min\{\Lambda_0,\Lambda_1,\Lambda_2\}$. Now noticing that $U^i_\lambda|_{T_\lambda}\equiv0$, by \cref{lem:SMP-HL} again we have $\frac{\partial U^i_\lambda}{\partial x_1}>0$ on $T_\lambda$, and hence as in \eqref{nablaonT}, $\frac{\partial u^i}{\partial x_1}=-\frac12\frac{\partial U^i_\lambda}{\partial x_1}<0$ on $T_\lambda$.
\end{prf}

\vskip 0.2in
\subsection{Main Theorem}
We now state our main theorem for case of bounded smooth simply connected domains.

When $\Omega$ being convex in one direction, assumed as $\bf e_1$, and the nonlinear term $\bf f$ satisfies some corresponding monotonic conditions in this direction, we can start to examine whether the solution of the system \eqref {eq:bound} will satisfy the corresponding monotonicity along this direction. The main results are the following.

\begin{thm}\label{thm:bound}
	Let $\Omega\subset\br^n$ be a $C^2$ bounded simply connected domain, convex in ${\bf e_1}$ direction. Assume ${\bf f}$ satisfy \ref{Fc}, \ref{Fzi}, \ref{Fzj}, \ref{Fp}, \ref{Fanti}. Let ${\bf u}=\left(u^1,\cdots,u^m\right)$ be a group of $\left[C^2(\overline{\Omega})\right]^m$ strictly convex solutions to \eqref{eq:bound}, then each $u^i$ must be
	$$u^i\left(x_1,x'\right)\geq u^i\left(2\Lambda_0-x_1,x'\right)\ \text{and}\ \frac{\partial u^i}{\partial x_1}(x)<0\ \text{in}\ \{x\in \Omega\ |\ x_1<\Lambda_0\}.$$
	
    Furthermore, if $\frac{\partial u^i}{\partial x_1}\left(\Lambda_0,x'\right)=0$ for some $x\in \left\{x_1=\Lambda_0\right\}$, then such $u^i$ must be symmetric with respect to $\left\{x_1=\Lambda_0\right\}$ and strictly decreasing in ${\bf e_1}$ direction with $x_1<\Lambda_0$, more precisely, 
	\begin{equation*}
	u^i(x)=u^i\left(\left|x_1-\Lambda_0\right|,x'\right)\ \text{in}\ \left\{x\in \Omega\ |\ |x_1-\Lambda_0|<\Lambda_0-\lambda_0\right\},
	\end{equation*}
	moreover, 
	\begin{equation*}
	\frac{\partial u^i}{\partial x_1}(x)<0\ \text{in}\ \left\{x\in \Omega\ |\ x_1<\Lambda_0\right\}.
	\end{equation*}
\end{thm}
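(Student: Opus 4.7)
The plan is to run the moving plane method in direction ${\bf e}_1$, starting with planes $T_\lambda$ just above $\lambda_0 := \inf_{x \in \Omega} x_1$ and pushing $\lambda$ up to $\Lambda_0$. The argument splits into the standard three steps: initiation near $\lambda_0$; continuation of the inequalities $U^i_\lambda \leq 0$ and $\frac{\partial u^i}{\partial x_1} \leq 0$ up to $\lambda = \Lambda_0$; and the symmetry dichotomy at $\Lambda_0$. The workhorse is the elliptic inequality \eqref{eq:EI} combined with \cref{lem:SMPFMA}.

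For the initiation step, I would show there exists $\varepsilon > 0$ such that $U^i_\lambda \leq 0$ and $\frac{\partial u^i}{\partial x_1} \leq 0$ in $\Sigma_\lambda$ for every $i$ and every $\lambda \in (\lambda_0, \lambda_0 + \varepsilon)$. Along each ${\bf e}_1$-fiber of $\Omega$ the function $u^i$ is strictly convex in $x_1$, and by \eqref{eq:bddbc1} its value at the left endpoint strictly exceeds every interior value on that fiber, forcing the one-sided derivative at the left endpoint to be strictly negative; by continuity $\frac{\partial u^i}{\partial x_1} < 0$ in a uniform left strip, and strict convexity then yields $u^i(x) > u^i(x_\lambda)$ for $x \in \Sigma_\lambda$ once $\lambda$ is close enough to $\lambda_0$. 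For the continuation step, set $\Lambda^* := \sup\{ \lambda \in (\lambda_0, \Lambda_0] : U^i_\mu \leq 0 \text{ and } \frac{\partial u^i}{\partial x_1} \leq 0 \text{ in } \Sigma_\mu, \forall i, \forall \mu \in (\lambda_0, \lambda) \}$. Continuity in $\lambda$ preserves the non-strict inequalities at $\Lambda^*$, and \cref{lem:SMPFMA} upgrades them to $U^i_{\Lambda^*} < 0$ in $\Sigma_{\Lambda^*}$, together with $\frac{\partial U^i_{\Lambda^*}}{\partial x_1} > 0$ and $\frac{\partial u^i}{\partial x_1} < 0$ on $T_{\Lambda^*}$. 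Assuming $\Lambda^* < \Lambda_0$ for contradiction, I would extend the inequalities to $\lambda \in (\Lambda^*, \Lambda^* + \eta)$ for some small $\eta > 0$: on any compact $K \Subset \Sigma_{\Lambda^*}$ the strict inequalities persist under small perturbations of $\lambda$ by uniform continuity; in the remaining thin strip near $\partial \Sigma_{\Lambda^*}$, the hypothesis $\Lambda^* < \Lambda_0$ (which excludes positions (I) and (II)) allows one to combine Hopf along the smooth part of $\partial\Omega$ with \cref{lemmaS} at the corner $\overline{T_{\Lambda^*}} \cap \partial\Omega$ to propagate the signs, producing the desired contradiction.

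For the symmetry step, if $\frac{\partial u^i}{\partial x_1}(\Lambda_0, x_0') = 0$ at some $x_0'$ on $T_{\Lambda_0}$, then \eqref{nablaonT} gives $\frac{\partial U^i_{\Lambda_0}}{\partial x_1}(\Lambda_0, x_0') = 0$, which rules out the strict alternative of \cref{lem:SMPFMA} applied at $\lambda = \Lambda_0$ and forces $U^i_{\Lambda_0} \equiv 0$ in $\Sigma_{\Lambda_0}$; this is exactly the reflection symmetry $u^i(x_1, x') = u^i(2\Lambda_0 - x_1, x')$. Strict monotonicity $\frac{\partial u^i}{\partial x_1}(x) < 0$ for $x_1 < \Lambda_0$ then follows: along each fiber $x_1 \mapsto \frac{\partial u^i}{\partial x_1}(x_1, x')$ is strictly increasing by strict convexity of $u^i$, and it vanishes at $x_1 = \Lambda_0$ by the symmetry just obtained.

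The main obstacle is the corner analysis in the continuation step. Standard interior Hopf applied to $U^i_\lambda$ cannot see the corner $\overline{T_{\Lambda^*}} \cap \partial \Omega$, and one must verify the Serrin condition \eqref{eq:SL} for the matrix ${\bf A^i}$ defined in \eqref{eq:aij}. This verification will combine the uniform ellipticity afforded by \ref{Fc}, the strict convexity of each $u^i$ at the corner, and the $C^2$ regularity of $\partial\Omega$; it is also the place where the Lipschitz (rather than $C^1$) assumption on ${\bf f}$ mainly intervenes, as the coefficients ${\bf B^i}, c^i$ in \eqref{eq:bi}--\eqref{eq:ci} need only be uniformly bounded.
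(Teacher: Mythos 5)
Your Steps 1 and 3 match the paper's argument in substance (the paper initiates via \cref{rmk:basicineq} and Hopf rather than fiberwise convexity, but both give $\frac{\partial u^i}{\partial x_1}<0$ on the compact set $\partial_0\Omega$ and hence on a uniform strip, after which $U^i_\mu<0$ follows by integrating $\partial u^i/\partial x_1$ along fibers). The genuine gap is in your continuation step, which is exactly where the mathematical content of the theorem sits. You propose the ``compact core plus thin boundary strip'' continuation and then defer the corner $\overline{T_{\Lambda^*}}\cap\partial\Omega$ to a final paragraph that says the verification ``will combine'' uniform ellipticity, convexity and $C^2$ regularity. That is not a proof, and as set up it cannot be completed: \cref{lemmaS} only yields the dichotomy $\frac{\partial U^i_{\Lambda^*}}{\partial s}<0$ \emph{or} $\frac{\partial^2 U^i_{\Lambda^*}}{\partial s^2}<0$ at the corner (the Serrin condition \eqref{eq:SL} holds with \emph{equality} there, since \eqref{HessainonT} and \eqref{eq:aij} force $a_{1j}=a_{j1}=0$ for $j\geq 2$ on $T_{\Lambda^*}$), and the second-order alternative by itself does not propagate the sign of $U^i_\lambda$ into the strip for $\lambda$ slightly above $\Lambda^*$. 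You have no mechanism to turn $\frac{\partial^2 U^i_{\Lambda^*}}{\partial x_1\partial x_n}(x^*)>0$ into a contradiction.

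The paper resolves this by running the continuation differently: if $\Lambda<\Lambda_0$, it takes $\lambda_k\downarrow\Lambda$ and \emph{interior maximum points} $x_k\in\Sigma_{\lambda_k}$ with $U^i_{\lambda_k}(x_k)\geq 0$ and $\nabla U^i_{\lambda_k}(x_k)=0$, passes to a limit $x^*\in\partial\Omega$ with $U^i_\Lambda(x^*)\geq0$ and $\nabla U^i_\Lambda(x^*)=0$, and only then invokes Hopf (tangency position (I)) or Serrin (orthogonal position (II)). In the corner case the vanishing of $\nabla U^i_\Lambda(x^*)$ together with \eqref{nablaonT}--\eqref{HessainonT} converts the Serrin dichotomy into $\frac{\partial^2 U^i_\Lambda}{\partial x_1\partial x_n}(x^*)>0$, and the contradiction is then obtained by integrating $\frac{\partial^2 U^i_\Lambda}{\partial x_1\partial x_n}$ along the segment from $x_k$ back to $T_\Lambda$, using $\frac{\partial U^i_\Lambda}{\partial x_n}(x_k)=0$ and \eqref{HessainonT} at the foot point. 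The sequence $x_k$ is therefore not optional bookkeeping but the device that makes the second-order alternative usable; without it (or an equivalent quantitative estimate in the thin strip) your Step 2 does not close. If you want to keep your scheme, you must either produce such a sequence of critical points or supply a quantitative Hopf/Serrin estimate near the corner that is uniform in $\lambda$.
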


\vskip 0.2in

\begin{rmk}
    As we can see in the proof, if $\Omega$ satisfy $\Lambda_2=\Lambda_0$, then the situation (II) will not be happened. Hence we can weaken then condition \ref{Fc} to be \ref{F0}, i.e. we don't need $f^i$ to be strictly positive.
\end{rmk}

\vskip 0.2in

If we assume more symmetry condition on $\Omega$ and $f^i$ (substituting \ref{Fanti} with \ref{Fsym1}), we can furthermore immediately have the following, by using \cref{thm:bound} again with ${\bf u_{\Lambda_0}}:=(u^i_{\Lambda_0})$. (Noting that in this case, the inequalities \eqref{eq:anti},\eqref{eq:split} will be slightly different to obtain the same result.)

\begin{thm}
	Let $\Omega\subset\br^n$ be a $C^2$ bounded simply connected domain, convex in ${\bf e_1}$ direction, and symmetric with respect to $\{x_1=\Lambda_0\}$. Assume ${\bf f}$ satisfy \ref{Fc}, \ref{Fzi}, \ref{Fzj}, \ref{Fp}, \ref{Fsym1}. Let ${\bf u}=(u^1,\cdots,u^m)$ be a group of $[C^2(\overline{\Omega})]^m$ strictly convex solutions of \eqref{eq:bound}, then each $u^i$ must be	symmetric with respect to $\{x_1=\Lambda_0\}$ and strictly decreasing in ${\bf e_1}$ direction with $x_1<\Lambda_0$. 
	
	More precisely, for all $i=1,\dots, m$,
	\begin{equation*}
	u^i(x)=u^i\left(\left|x_1-\Lambda_0\right|,x'\right),\ \text{in}\ \mleft\{x\in \Omega\ \middle|\ \left|x_1-\Lambda_0\right|<\Lambda_0-\lambda_0\mright\},
	\end{equation*}
	moreover, 
	\begin{equation*}
	\frac{\partial u^i}{\partial x_1}(x)<0\ \text{in}\ \{x\in \Omega\ |\ x_1<\Lambda_0\}.
	\end{equation*}
\end{thm}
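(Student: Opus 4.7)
The plan is to deduce this theorem from \cref{thm:bound} by applying that theorem twice---once to ${\bf u}$ itself and once to its $\Lambda_0$-reflection ${\bf u}_{\Lambda_0}=(u^i_{\Lambda_0})_{i=1}^m$ with $u^i_{\Lambda_0}(x):=u^i(2\Lambda_0-x_1,x')$---exactly as the remark preceding the statement suggests. The two applications will produce matching one-sided comparisons that together force symmetry, while the strict monotonicity is read off directly from the first.

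The first step is to check that ${\bf u}_{\Lambda_0}$ is again a group of $[C^2(\overline{\Omega})]^m$ strictly convex solutions of \eqref{eq:bound}. Convexity, regularity, and the constant Dirichlet data transfer under the reflection because $\Omega$ is symmetric about $\{x_1=\Lambda_0\}$. For the PDE, \eqref{eq:det} gives $\det(D^2u^i_{\Lambda_0}(x))=\det(D^2u^i(x_{\Lambda_0}))$, and a direct computation yields $\nabla u^i(x_{\Lambda_0})=\overline{\nabla u^i_{\Lambda_0}(x)}$; then \ref{Fsym1}, applied in coordinates recentered at $\Lambda_0$, gives the invariance $f^i(x_{\Lambda_0},{\bf u}(x_{\Lambda_0}),\nabla u^i(x_{\Lambda_0}))=f^i(x,{\bf u}_{\Lambda_0}(x),\nabla u^i_{\Lambda_0}(x))$, so ${\bf u}_{\Lambda_0}$ satisfies the same system.

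Second, applying \cref{thm:bound} to ${\bf u}$ records, for each $i$, both the strict monotonicity $\frac{\partial u^i}{\partial x_1}<0$ on $\{x_1<\Lambda_0\}$ and the one-sided comparison $u^i(x)\geq u^i(x_{\Lambda_0})$ on that set. Applying \cref{thm:bound} a second time with ${\bf u}_{\Lambda_0}$ in place of ${\bf u}$ yields $u^i_{\Lambda_0}(x)\geq u^i_{\Lambda_0}(x_{\Lambda_0})$, which unpacks to the reverse inequality $u^i(x_{\Lambda_0})\geq u^i(x)$ on the same set. Combining the two forces $u^i(x)=u^i(x_{\Lambda_0})=u^i(|x_1-\Lambda_0|,x')$ on $\{|x_1-\Lambda_0|<\Lambda_0-\lambda_0\}$, which is the claimed symmetry.

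The subtlety, flagged by the parenthetical "the inequalities \eqref{eq:anti},\eqref{eq:split} will be slightly different to obtain the same result," is that \cref{thm:bound} was proved under \ref{Fanti}, whereas here we have only \ref{Fsym1}, and the latter does not in general imply the former (it encodes reflection symmetry of $f^i$ but not monotonicity in $|x_1|$). To license both applications, I would rerun the proof of \cref{thm:bound} with the derivation at \eqref{eq:anti} replaced: the inequality $\det(D^2u^i_\lambda(x))\geq f^i(x,{\bf u}_\lambda(x),\nabla u^i_\lambda(x))$ previously obtained via \ref{Fanti} is now obtained from the exact identity supplied by \ref{Fsym1} together with the fact (from Step~1) that ${\bf u}_{\Lambda_0}$ itself solves the full system, so the chain in \eqref{eq:split} becomes a chain of equalities and still yields an elliptic inequality of type \eqref{eq:EI} for $U^i_\lambda$. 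This is the main obstacle; once it is resolved, the moving-plane mechanism together with the strong maximum principle (\cref{lem:SMP-HL}) and Serrin's corner lemma (\cref{lemmaS}) close the argument verbatim as in \cref{thm:bound}.
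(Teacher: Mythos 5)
Your overall strategy --- apply \cref{thm:bound} once to ${\bf u}$ and once to the reflected family ${\bf u}_{\Lambda_0}$, then combine the two one-sided comparisons --- is exactly what the paper intends (its entire proof is the one-sentence remark preceding the statement), and your verification that ${\bf u}_{\Lambda_0}$ solves the system and your combination step are fine. The gap lies in your resolution of the very subtlety you flag. You claim that for an \emph{intermediate} plane $\lambda<\Lambda_0$ the inequality $\det(D^2u^i_\lambda(x))\geq f^i(x,{\bf u}_\lambda(x),\nabla u^i_\lambda(x))$ becomes an exact identity ``supplied by \ref{Fsym1} together with the fact that ${\bf u}_{\Lambda_0}$ solves the system.'' It does not. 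Normalize $\Lambda_0=0$. For $x\in\Sigma_\lambda$ with $\lambda<0$ one has $|(x_\lambda)_1|=|2\lambda-x_1|<|x_1|$ strictly, so \ref{Fsym1} merely rewrites the two sides as $f^i\left(|2\lambda-x_1|,x',\cdot\right)$ and $f^i\left(|x_1|,x',\cdot\right)$, i.e.\ values of $f^i$ at two \emph{different} base points; without monotonicity of $f^i$ in $|x_1|$ --- which is precisely the content of \ref{Fanti} and is not implied by \ref{Fsym1} --- no comparison follows. Invoking that ${\bf u}_{\Lambda_0}$ solves the system does not help either: that fact concerns the reflection across $T_{\Lambda_0}$ and expresses $\det(D^2u^i_\lambda(x))$ as $f^i$ evaluated at the translated point $\left(x_1+2(\Lambda_0-\lambda),x'\right)$, which again cannot be compared with $f^i(x,\cdot)$ absent translation invariance in $x_1$. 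Consequently the elliptic inequality \eqref{eq:EIB} on $\Sigma_\lambda$, which drives Step 1, Step 2 and \cref{lem:SMPFMA} in the proof of \cref{thm:bound}, is not established for any $\lambda<\Lambda_0$, so neither application of \cref{thm:bound} is licensed.

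To close the argument one must either keep \ref{Fanti} (recentered at $\Lambda_0$) alongside \ref{Fsym1} --- which is what the paper's phrase about ``strengthening'' \ref{Fanti} suggests is intended, even though the hypothesis list formally drops it --- or assume $f^i$ is independent of $x_1$, in which case \eqref{eq:anti} holds with equality for every $\lambda$. A secondary point: to apply \cref{thm:bound} to ${\bf u}_{\Lambda_0}$ you also need the boundary conditions \eqref{eq:bddbc1}--\eqref{eq:bddbc2} for the reflected functions; you check this only for constant Dirichlet data, whereas the statement concerns general solutions of \eqref{eq:bound}, for which the reflected functions satisfy the \emph{reversed} boundary monotonicity unless the boundary data are themselves symmetric about $\{x_1=\Lambda_0\}$.
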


\vskip 0.2in

Especially, when $\Omega$ is a ball, if we substitute \ref{Fanti} to the symmetric one \ref{Fsymall}, then by using \cref{thm:bound} respect to all directions in $\br^n$, we have immediately that

\begin{coro}\label{thm:ball}
	Let $\Omega=B_R$ be a arbitrary ball with radius $R$. Assume ${\bf f}$ satisfy \ref{F0}, \ref{Fzi}, \ref{Fzj}, \ref{Fp}, \ref{Fsymall}. Let ${\bf u}=(u^1,\cdots,u^m)$ be a group of $[C^2(\overline{\Omega})]^m$ strictly convex solutions to \eqref{eq:bound}, then each $u^i$ must be radially symmetric and strictly increasing respect to the center of $B_R$. 
	
	More precisely, denote the center of $B_R$ as $x^*\in\br^n$, and $r=|x-x^*|$, then for $i=1\dots,m$, each $u^i$ must be
	\begin{equation*}
	u^i(x)=u^i(r),\ \forall\ x\in B_R(x^*),
	\end{equation*}
	moreover,
	\begin{equation*}
	\frac{d u^i}{dr}(x)>0,\ \forall\ x\in B_R(x^*),
	\end{equation*}
\end{coro}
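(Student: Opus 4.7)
The plan is to derive \cref{thm:ball} by invoking the preceding symmetric theorem (the variant of \cref{thm:bound} in which \ref{Fanti} is replaced by \ref{Fsym1}) once for every unit direction $\nu \in \br^n$ about the center of $B_R$, and then to assemble the resulting directional symmetries into full radial symmetry and the directional monotonicities into strict radial monotonicity.

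Fix a unit vector $\nu$, and after translating assume $x^* = 0$. Then $B_R$ is $C^2$, bounded, simply connected, convex in direction $\nu$, and symmetric across $T_0^\nu = \{y \cdot \nu = 0\}$, so $\Lambda_0^\nu = 0$. The rotation invariance \ref{Fsymall} specializes to \ref{Fsym1} in the direction $\nu$ by choosing $O = O' = I - 2\nu^T\nu$, the reflection through $T_0^\nu$. Moreover, for the ball the hyperplane $T_\lambda^\nu$ is orthogonal to $\partial B_R$ only when $\lambda = 0$, so $\Lambda_2^\nu = \Lambda_0^\nu$; by the remark following \cref{thm:bound} this justifies using \ref{F0} in place of the stronger \ref{Fc} stated there. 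Hence all hypotheses of the directional symmetric theorem are satisfied.

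Applying that theorem in direction $\nu$ yields, for every $i$,
\begin{equation*}
u^i(y) = u^i\bigl(y - 2(y \cdot \nu)\nu\bigr)\ \text{for all } y \in B_R,\qquad \nu \cdot \nabla u^i(y) < 0\ \text{whenever}\ y \cdot \nu < 0.
\end{equation*}
Letting $\nu$ vary: given two points $y \neq y'$ with $|y| = |y'|$, the choice $\nu = (y' - y)/|y' - y|$ makes $y'$ the reflection of $y$ across $T_0^\nu$, so $u^i(y) = u^i(y')$; this shows $u^i$ is radial, $u^i(x) = u^i(r)$ with $r = |x - x^*|$. For strict monotonicity at $x \neq x^*$, take $\nu = -(x - x^*)/|x - x^*|$, which satisfies $(x - x^*) \cdot \nu < 0$; the directional monotonicity $\nu \cdot \nabla u^i(x) < 0$ then rearranges to $\frac{du^i}{dr}(x) = \frac{x - x^*}{|x - x^*|} \cdot \nabla u^i(x) > 0$.

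The main point to verify carefully, and essentially the only non-routine step, is that \ref{Fsymall} implies \ref{Fsym1} in every direction simultaneously and that $\Lambda_2^\nu = \Lambda_0^\nu$ for the ball in every direction, so that the hypothesis \ref{F0} of \cref{thm:ball} genuinely suffices to invoke the preceding theorem (whose stated form requires \ref{Fc}). Once these two observations are in place the argument reduces to assembling directional conclusions, with no further obstacle.
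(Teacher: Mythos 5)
Your proposal is correct and follows the same route the paper intends: apply the directional symmetry theorem in every direction $\nu$ through the center (noting \ref{Fsymall} specializes to the reflection invariance \ref{Fsym1} via $O=O'=I-2\nu^T\nu$, and that $\Lambda_2^\nu=\Lambda_0^\nu=0$ for the ball so \ref{F0} suffices per the paper's remark), then assemble the reflections into radial symmetry and the directional monotonicities into $\frac{du^i}{dr}>0$. The paper states this derivation only as "immediate"; your write-up supplies exactly the details it omits, with no gaps.
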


\vskip 0.2in
\subsection{Proof of \cref{thm:bound}}

We are now in a position to prove the theorem.

\newenvironment{prf1}{{\noindent\bf Proof of \cref{thm:bound}.}}{\hfill $\square$\par}
\begin{prf1}.
	
{\bf Step 1}: There exists a real number $\lambda<\min\{\Lambda_0,\Lambda_1,\Lambda_2\}$ such that $\forall\mu<\lambda$, $\forall 1\leq i\leq m$, $\forall x\in\Sigma_\mu$, $U^i_\mu(x)<0$.

Recall that $\partial_0\Omega$ is the set of firstly touching point of $\partial\Omega$ with $\{x_1=\lambda_0\}$. Noting that by \cref{rmk:basicineq}, we have $\frac{\partial u^i}{\partial x_1}(x)<0, \forall x\in\partial_0\Omega$, hence by the continuity of $\nabla u^i$ up to the boundary, for sufficiently small $\epsilon_x>0$, we must have $$\frac{\partial u^i}{\partial x_1}<0\ \text{on}\ \Omega\cap B_{\epsilon_x}(x), \ \forall\ 1\leq i\leq m.$$ Noting that $\partial_0\Omega$ is a compact set, hence we must have finite cover of it by $\{B_{\epsilon_x}(x)\}$, denoted as $\{B_{\epsilon_{x_i}}(x_i)\}_{i=1}^K$. 

Now denote $$\Omega_\epsilon:=\Omega\cap\bigcup\limits_{i=1}^K B_{\epsilon_{x_i}}(x_i),$$ we have $$\frac{\partial u^i}{\partial x_1}(x)<0,\ x\in \Omega_\epsilon,\ \forall\ i=1,\dots,m, $$ hence when $\lambda$ sufficiently close to $\lambda_0$ (such that $\Sigma_\lambda\cup\Sigma_\lambda^\lambda\subset\Omega_\epsilon$), $\forall \mu\in(R_0,\lambda)$, we have $$U^i_\mu(x)=\int_{x_1}^{2\mu-x_1}\frac{\partial u^i}{\partial x_1}(s,x')ds<0,\ x\in \Sigma_\mu,$$ and we have done.

{\bf Step 2}: Let $\Lambda:=\sup\{\lambda<\Lambda_0\ |\ U^i_\mu(x)<0,\ \forall x\in\Sigma_\mu,\ \forall\ 1\leq i\leq m,\ \forall\ \mu<\lambda\}$, we want to prove that $\Lambda=\Lambda_0$.

If not, that is $\Lambda<\Lambda_0$, then $U^i_\Lambda\leq 0$ by the continuity of $u^i$, further more, $U^i_\mu\leq 0, \forall \mu\leq\Lambda$, it's clear to see by covering argument that $\frac{\partial u^i}{\partial x_1}(x)\leq0, \forall x\in\Sigma_\Lambda$. Hence by \cref{lem:SMPFMA}, $U^i_\Lambda<0$ in $\Sigma_\Lambda$ and $\frac{\partial U^i_\Lambda}{\partial x_1}>0$, $\frac{\partial u^i}{\partial x_1}<0$ on $T_\Lambda$ for all $i=1,\dots, m$.

We consider a sequence of $\lambda_k\in(\Lambda,\Lambda_0)$ converging to $\Lambda$ and a sequence of $x_k\in\Sigma_{\lambda_k}$ such that $U^i_{\lambda_k}(x_k)\geq 0$ for a specific $i$ (at least one of them verifies this for infinitely many $k$'s). Since $U^i_{\lambda_k}|_{\partial\Sigma_{\lambda_k}}\leq0$ by boundary conditions, we can substitute $\{x_k\}$ to be lying in the interior of $\Sigma_{\lambda_k}$ such that $$U^i_{\lambda_k}(x_k)=\max\limits_{\overline{\Sigma_{\lambda_k}}}U^i_{\lambda_k}\geq0,$$ and hence $\nabla U^i_{\lambda_k}(x_k)=0.$
By passing to a subsequence if necessary, due to the boundedness of $\Omega$, we have $x_k\to x^*\in\overline{\Sigma_\Lambda}$, and
\begin{equation}\label{eq:bddmax} U^i_{\Lambda}(x^*)\geq 0,
\end{equation} 
\begin{equation}\label{eq:bddgrad} 
\nabla U^i_{\Lambda}(x^*)=0. 
\end{equation} 

Noting that $U^i_\Lambda|_{\Sigma_\Lambda}<0$ shows that $x^*\in\partial\Sigma_\Lambda=\partial\Omega\cup T_\Lambda$, while $\left.\frac{\partial U_\Lambda^i}{\partial x_1}\right|_{T_\Lambda}>0$ shows that $x^*\in\partial\Omega$. Due to the geometry of $\Omega$, there are still two cases that could occur:
\begin{enumerate}
	\renewcommand{\theenumi}{\textbf{Case \arabic{enumi}}}
	\renewcommand{\labelenumi}{\theenumi}
	
	\item $x^*\in\partial\Omega\setminus\overline{T_\Lambda}$. 
	
	In this case, boundary conditions \eqref{eq:bddbc1} and $U^i_{\Lambda}(x^*)\geq 0$ shows that $x^*\in\partial\Sigma_\Lambda\setminus\{x\in\partial\Omega\ |\ x^\Lambda\in\Omega\}$, then we have $\left((x^*)^\Lambda\right)\in\partial\Omega$. In fact, $x^*$ is on the position where the situation (I) occurs. Hence the unit interior normal vectors of these two points must be coincided with each other and both be orthogonal to ${\bf e_1}$, without lost of generality, we denote them as ${\bf e_n}$. 
	
	Since $U^i_{\Lambda}(x^*)\geq 0$ and boundary condition \eqref{eq:bddbc2} shows that $U^i_\Lambda(x^*)=0$, noting that \eqref{eq:EIB} also holds on $\Sigma_\Lambda$, then \cref{lem:SMP-HL} ensures that 
	\begin{equation}\label{eq:bdd1}
	\frac{\partial U^i_\Lambda}{\partial x_n}(x^*)<0,
	\end{equation}
	which is contradictory to \eqref{eq:bddgrad}.
	
	\item $x^*\in\partial\Omega\cap\overline{T_\Lambda}$.
	
	In this case, denote the unit outer normal vector of $x^*$ as $\nu$, 

	suppose that $\nu_1<0$, then $U^i_\Lambda(x^*)=0$. Using \cref{lem:SMPFMA} on \eqref{eq:EIB}, which holds over $\Sigma_\Lambda$ in this case, shows that $\frac{\partial U^i_\Lambda}{\partial x_1}(x^*)>0$, which leads a contradiction to \eqref{eq:bddgrad}. Hence $\nu_1=0$, which means that $x^*$ is on the position where the situation (II) happens.
	
	Without lost of generality, we assume $\nu=-{\bf e_n}$. Choosing $s:=-{\bf e_1}+{\bf e_n}$ as the non-tangent direction entering $\Sigma_\Lambda$, noting that \eqref{eq:EIB} still holds over $\Sigma_\Lambda$ now, and \eqref{eq:det} together with \eqref{eq:aij} shows that $a_{1j}=a_{j1}=0,\forall j=2,\dots,n$. Now since $T_\Lambda$ is tangent with $\partial\Omega$ at $x^*$, it can be locally regarded as $\left\{\rho\equiv x_1-\Lambda=0 \right\}$ intersect with $\left\{\sigma\equiv x_2+\dots+x_n=0\right\}$ at $x^*$, hence $a_{ij}\rho_i\sigma_j=0$ at $x^*$, \cref{lemmaS} shows that either \begin{equation}\label{eq:bdd4}
	\frac{\partial U^i_\Lambda}{\partial s}(x^*)=-\frac{\partial U^i_\Lambda}{\partial x_1}(x^*)+\frac{\partial U^i_\Lambda}{\partial x_n}(x^*)<0,
	\end{equation}
	or 
	\begin{equation}\label{eq:bdd5}
	\frac{\partial^2 U^i_\Lambda}{\partial s^2}(x^*)=\frac{\partial^2 U^i_\Lambda}{\partial x_1^2}(x^*)-2\frac{\partial^2 U^i_\Lambda}{\partial x_1\partial x_n}(x^*)+\frac{\partial^2 U^i_\Lambda}{\partial x_n^2}(x^*)<0. 
	\end{equation}
	
	Noting that \eqref{nablaonT} and \eqref{HessainonT}, together with \eqref{eq:bddgrad}, \eqref{eq:bdd4} and \eqref{eq:bdd5}, shows that $\frac{\partial^2 U^i_\Lambda}{\partial x_1\partial x_n}(x^*)>0$.
	
	Consider the segment $I_k$ in the ${\bf -e_1}$ directing from $x_k$ to $y_k\in T_\Lambda$. Then, due to continuity, for sufficiently large $k$ such that $\frac{\partial^2 U^i_\Lambda}{\partial x_1\partial x_n}>0$ holds in $I_k$, we have 
	$$ 0=\frac{\partial U^i_\Lambda}{\partial x_n}(x_k)=	\frac{\partial U^i_\Lambda}{\partial x_n}(y_k)+\int_{y_{k,1}}^{x_{k,1}}\frac{\partial^2 U^i_\Lambda}{\partial x_1\partial x_n}(s,x')ds>0$$
	which is a contradiction.
\end{enumerate}

In summary, the above two cases would not happen, and hence $\Lambda=\Lambda_0$.

{\bf Step 3}: Conclusions.

Now $\Lambda=\Lambda_0$, then $U^i_\Lambda\leq 0$ by the continuity of $u^i$, and $\frac{\partial u^i}{\partial x_1}<0$ in $\{x\in \Omega\ |\ x_1<\Lambda_0\}$ by \cref{lem:SMPFMA} on $\Sigma_\lambda,\ \forall\lambda<\Lambda$ with a covering argument.

For the second part assertion in the theorem, noting that \eqref{eq:EIB} also holds on $\Sigma_\Lambda$, hence $\frac{\partial u^i}{\partial x_1}(\Lambda_0,x')=0$ for some $x\in \{x_1=\Lambda_0\}$ and \cref{lem:SMPFMA} shows that $U^i_\Lambda\equiv0$ in $\Sigma_\Lambda$, and then we finish the proof of the whole theorem.
\end{prf1}

\vskip 0.2in

\section{Bounded Tube Shape Domains}
\label{sec:bt}
\noindent

Now we turn our attention on the bounded tubes, any tubes in $\br^n$ can always regarded as $C_H:=\Omega\times(-H,H)$ up to rotations and translations, where $\Omega\subset\br^{n-1}$ is a bounded simply connected domain, $H>0$, we assume $\partial\Omega\in C^2$, and denote the upper surface as $C_u:=\Omega\times\{x_n=H\}$, and the lower surface as $C_l:=\Omega\times\{x_n=-H\}$. 

In this case, we mainly consider the following constant-boundary Dirichlet problem for \eqref{eq:MA}
\begin{equation}\label{eq:cylinder}
\left\{\begin{array}{rl}
\det(D^2 u^i)\ =&f^i(x,{\bf u},\nabla u^i),\ x\in\ C_H,\\
u^i\ =&c^i,\hspace{1.9cm}x\in\partial C_H,\  i=1,\dots,m.\\
\end{array}
\right.
\end{equation}
where $c^i\in\br$ are constants.

\vskip 0.2in

Similar to the case of bounded smooth simply connected domain case, when we move the hyperplane $T_\lambda$ along a direction, assumed to be ${\bf e_1}$, from left negative infinity to the left hand side boundary of $\Omega$, denote $\lambda_0:=\inf\mleft\{x_1\ \middle|\ x\in C_H\mright\}$, then the set of firstly touching points of $\left\{x_1=\lambda\right\}$ with $\partial C_H$ is denoted as $\partial_0 C_H:=\{x_1=\lambda_0\}\cap \partial C_H$. And as we continue to move, we can define $$\Lambda_0:=\sup\mleft\{\lambda>\lambda_0\ \middle|\  \Sigma_\lambda^\lambda\subset\Omega\mright\},$$ similarly, with two probably happened situations, and define $\Lambda_1,\Lambda_2$ respectively. 

\vskip 0.2in
\subsection{Main Theorem}
We now state our main theorem for case of bounded tubes.

When $\Omega$ being convex in one direction, assumed as $\bf e_1$, and the nonlinear term $\bf f$ satisfies some corresponding monotonic conditions in this direction, we can start to examine whether the solution of the system \eqref {eq:cylinder} will satisfy the corresponding monotonicity along this direction. The main results are the following.

\begin{thm}\label{thm:cylinder}
	Let $C_H=\Omega\times(-H, H)$ be a $C^2$ bounded tubes in $\br^n$, where $\Omega\subset\br^{n-1}$ is a $C^2$ bounded simply connected domain being convex along with ${\bf e_1}$. Assume ${\bf f}$ satisfy \ref{Fc}, \ref{Fzi}, \ref{Fzj}, \ref{Fp}, \ref{Fanti}. Let ${\bf u}=\left(u^1,\cdots,u^m\right)\in \left[C^2\left(\overline{C_H}\right)\right]^m$  be a group of strictly convex solutions to \eqref{eq:cylinder}, then each $u^i$ must be
	$$u^i\left(x_1,x'\right)\geq u^i\left(2\Lambda_0-x_1,x'\right)\ \text{and}\ \frac{\partial u^i}{\partial x_1}(x)<0\ \text{in} \mleft\{x\in C_H\ \middle|\ x_1<\Lambda_0\mright\}.$$
	
	Furthermore, if
	\begin{equation}\label{eq:nd=0cylinder}
	\frac{\partial u^i}{\partial x_1}\left(\Lambda_0,x'\right)=0 ~for~some~ x\in \left\{x_1=\Lambda_0\right\},
	\end{equation} then such $u^i$ must be symmetric with respect to $\left\{x_1=\Lambda_0\right\}$ and strictly decreasing in ${\bf e_1}$ direction with $x_1<\Lambda_0$, more precisely, 
	\begin{equation*}
	u^i(x)=u^i\left(\left|x_1-\Lambda_0\right|,x'\right),\ \text{in}\ \mleft\{x\in C_H\ \middle|\ \left|x_1-\Lambda_0\right|<\Lambda_0-\lambda_0\mright\},
	\end{equation*}
	moreover, 
	\begin{equation*}
	\frac{\partial u^i}{\partial x_1}(x)<0,\ \text{in}\ \mleft\{x\in C_H\ \middle|\ x_1<\Lambda_0\mright\}.
	\end{equation*}
\end{thm}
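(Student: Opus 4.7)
The plan is to mirror the three-step moving plane argument used in the proof of \cref{thm:bound}, adapted to the tube $C_H = \Omega \times (-H, H)$. All preliminary calculations of \cref{sec:pre} and the elliptic inequality \eqref{eq:EIB} continue to hold on $\Sigma_\lambda \subset C_H$, and \cref{lem:SMPFMA} applies verbatim for any $\lambda < \min\{\Lambda_0, \Lambda_1, \Lambda_2\}$ because reflection across $T_\lambda$ leaves the top and bottom caps $C_u, C_l$ set-wise invariant and keeps the reflected lateral piece inside $C_H$. \cref{rmk:basicineq} still yields $\frac{\partial u^i}{\partial \nu} > 0$ on the smooth part of $\partial C_H$, so Step 1 proceeds as before: by compactness of $\partial_0 C_H$ and continuity of $\nabla u^i$ up to the smooth boundary, one finds a neighborhood of $\partial_0 C_H$ on which $\frac{\partial u^i}{\partial x_1} < 0$, whence $U^i_\mu < 0$ in $\Sigma_\mu$ for every $\mu$ sufficiently close to $\lambda_0$.

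For Step 2, let $\Lambda$ be the supremum of those $\lambda < \Lambda_0$ for which $U^i_\mu < 0$ on $\Sigma_\mu$ holds for every $\mu < \lambda$ and every $i$, and suppose for contradiction $\Lambda < \Lambda_0$. As in \cref{thm:bound}, \cref{lem:SMPFMA} gives $U^i_\Lambda < 0$ in $\Sigma_\Lambda$ together with $\frac{\partial U^i_\Lambda}{\partial x_1} > 0$ and $\frac{\partial u^i}{\partial x_1} < 0$ on $T_\Lambda$, and a sequence $x_k$ of interior maximizers of $U^i_{\lambda_k}$ with $\lambda_k \searrow \Lambda$ converges to some $x^* \in \partial C_H$ with $U^i_\Lambda(x^*) \geq 0$ and $\nabla U^i_\Lambda(x^*) = 0$. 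If $x^*$ lies on the open lateral surface $\partial\Omega \times (-H, H)$, away from the edges, the arguments of Cases 1 and 2 of \cref{thm:bound} apply unchanged. The genuinely new case is when $x^*$ meets the top or bottom caps.

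If $x^*$ lies in the open cap $C_u$ (or $C_l$) and outside $\overline{T_\Lambda}$, then $x^*_\Lambda$ is again on $C_u$ (or $C_l$), so the Dirichlet data force $U^i_\Lambda(x^*) = 0$; applying \cref{lem:SMP-HL} to \eqref{eq:EIB} on $\Sigma_\Lambda$ with outer unit normal $\pm {\bf e_n}$ gives $\frac{\partial U^i_\Lambda}{\partial x_n}(x^*) \neq 0$, contradicting $\nabla U^i_\Lambda(x^*) = 0$. The main obstacle is the remaining case, in which $x^*$ sits on one of the $(n-2)$-dimensional edges $\partial\Omega \times \{\pm H\}$, possibly also inside $\overline{T_\Lambda}$: there $\partial \Sigma_\Lambda$ is only piecewise $C^2$ and \cref{lem:SMP-HL} no longer applies. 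We instead invoke Serrin's \cref{lemmaS} with $\rho$ encoding the corresponding cap and $\sigma$ encoding the lateral surface (and, when $x^* \in \overline{T_\Lambda}$, with $T_\Lambda$ taking the role of one of the two surfaces). The uniform positivity \ref{Fc} is exactly what guarantees the uniform ellipticity $a_{ij}\xi_i\xi_j \geq c_0|\xi|^2$ required by \cref{lemmaS}; the tangential condition \eqref{eq:SL} is checked using the explicit form of $A^i$ from \eqref{eq:aij} together with \eqref{nablaonT} and \eqref{HessainonT}, and the ensuing first- or second-order strict sign at $x^*$ contradicts $\nabla U^i_\Lambda(x^*) = 0$ after a segment-integration argument analogous to Case 2 of \cref{thm:bound}. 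Once Step 2 is completed, $\Lambda = \Lambda_0$; Step 3 then concludes exactly as in \cref{thm:bound}: the monotonicity follows from \cref{lem:SMPFMA} on each $\Sigma_\lambda$ with $\lambda < \Lambda_0$, and the symmetry assertion from the equality alternative of \cref{lem:SMPFMA} applied on $\Sigma_{\Lambda_0}$ under the vanishing hypothesis \eqref{eq:nd=0cylinder}.
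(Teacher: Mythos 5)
Your Steps 2 and 3 track the paper's argument closely (the paper folds the ``open cap'' case into its Case 1/situation (I), and treats the edge meeting $\overline{T_\Lambda}$ with Serrin's lemma exactly as you describe), but your Step 1 has a genuine gap. You claim that \cref{rmk:basicineq} plus compactness and continuity give $\frac{\partial u^i}{\partial x_1}<0$ on a neighborhood of the whole first-touching set $\partial_0 C_H$, ``as before.'' This fails at the edge points $(\lambda_0,x_0'',\pm H)\in\partial_0 C_H$ where the leftmost lateral boundary meets the caps: these are non-smooth boundary points where Hopf's lemma does not apply, and in fact the Dirichlet data force $\frac{\partial u^i}{\partial x_1}=0$ there, since ${\bf e_1}$ is tangent to the cap $\overline{\Omega}\times\{\pm H\}$ on which $u^i\equiv c^i$. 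So there is no strict sign at those points to propagate by continuity, yet $\Sigma_\mu$ contains points arbitrarily close to them, so the integration $U^i_\mu(x)=\int_{x_1}^{2\mu-x_1}\frac{\partial u^i}{\partial x_1}\,ds<0$ cannot be justified near the corners by your argument. This corner behaviour is precisely the new difficulty of the tube case in Step 1, not merely a Step 2 issue as your proposal suggests.

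The paper closes this gap by applying Serrin's corner lemma (\cref{lemmaS}) to $u^i-c^i$ itself (not to $U^i_\Lambda$) at the corners $x_{\pm H}=(\lambda_0,x_0'',\pm H)$, with $\rho$ and $\sigma$ the defining functions of the lateral surface and the cap. The boundary conditions kill the first-order alternative ($\frac{\partial u^i}{\partial x_1}=\frac{\partial u^i}{\partial x_n}=\frac{\partial^2 u^i}{\partial x_1^2}=\frac{\partial^2 u^i}{\partial x_n^2}=0$ at the corner), so the second-order alternative yields $\frac{\partial^2 u^i}{\partial x_1\partial x_n}(x_{\pm H})>0$; by $C^2$ continuity this persists in a small ball, and integrating $\frac{\partial^2 u^i}{\partial x_1\partial x_n}$ vertically from an interior point up to the cap (where $\frac{\partial u^i}{\partial x_1}=0$) recovers $\frac{\partial u^i}{\partial x_1}<0$ near the corners. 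Combined with the Hopf argument on the compact remainder of $\{x_1=\lambda_0\}\cap\partial C_H$, this is what actually launches the moving plane. You should add this corner analysis to your Step 1; without it the starting point of the method is unproved.
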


\vskip 0.2in

If we assume more symmetry condition on $\Omega$ and ${\bf f}$ (substituting \ref{Fanti} with \ref{Fsym1}) to satisfy \eqref{eq:nd=0cylinder}, we can furthermore immediately have the following, by using \cref{thm:cylinder} again with ${\bf u_{\Lambda_0}}:=(u^i_{\Lambda_0})$. (Noting that in this case, the inequalities \eqref{eq:anti},\eqref{eq:split} will be slightly different to obtain the same result.)

\begin{thm}
    Let $C_H=\Omega\times(-H, H)$ be a $C^2$ bounded tubes in $\br^n$, where $\Omega\subset\br^{n-1}$ is a $C^2$ bounded simply connected domain being convex along with ${\bf e_1}$, and symmetric with respect to $\{x_1=\Lambda_0\}$. Assume ${\bf f}$ satisfy \ref{Fc}, \ref{Fzi}, \ref{Fzj}, \ref{Fp}, \ref{Fsym1}. Let ${\bf u}=(u^1,\cdots,u^m)$ be a group of $\left[C^2(\overline{C_H})\right]^m$ strictly convex solutions to \eqref{eq:cylinder}, then each $u^i$ must be symmetric with respect to $\left\{x_1=\Lambda_0\right\}$, and strictly decreasing in $\left\{x_1<\Lambda_0\right\}$ in ${\bf e_1}$ direction with $x_1<\Lambda_0$. 
	
	More precisely, for all $i=1,\dots, m$,
	\begin{equation*}
	u^i(x)=u^i\left(\left|x_1-\Lambda_0\right|,x'\right),\ \text{in}\ \mleft\{x\in C_H\ \middle|\ \left|x_1-\Lambda_0\right|<\Lambda_0-\lambda_0\mright\},
	\end{equation*}
	moreover, 
	\begin{equation*}
	\frac{\partial u^i}{\partial x_1}(x)<0,\ \text{in}\ \mleft\{x\in C_H\ \middle|\ x_1<\Lambda_0\mright\}.
	\end{equation*}
\end{thm}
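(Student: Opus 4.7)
The plan is to deduce the theorem from \cref{thm:cylinder} by two applications, one to the original solution ${\bf u}$ and one to the reflected solution ${\bf u}_{\Lambda_0}=(u^1_{\Lambda_0},\ldots,u^m_{\Lambda_0})$. First I verify that ${\bf u}_{\Lambda_0}$ is itself a classical strictly convex solution to \eqref{eq:cylinder}: convexity and $C^2$ regularity are preserved under the orthogonal reflection across $\{x_1=\Lambda_0\}$, the boundary values are preserved because $C_H$ is symmetric about that hyperplane with constant boundary data $c^i$, and for the PDE one computes $\det D^2 u^i_{\Lambda_0}(x)=\det D^2 u^i(x_{\Lambda_0})=f^i(x_{\Lambda_0},{\bf u}_{\Lambda_0}(x),\overline{\nabla u^i_{\Lambda_0}(x)})$. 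Condition \ref{Fsym1}, which (after translating so that $\Lambda_0=0$) says exactly that $f^i$ is invariant under the simultaneous sign-flip of $x_1$ and $p_1$, then rewrites the right-hand side as $f^i(x,{\bf u}_{\Lambda_0}(x),\nabla u^i_{\Lambda_0}(x))$, so ${\bf u}_{\Lambda_0}$ solves the same system.

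Next, I apply \cref{thm:cylinder} to both ${\bf u}$ and ${\bf u}_{\Lambda_0}$. The parenthetical note in the paper indicates that the moving-plane proof of \cref{thm:cylinder} carries over under \ref{Fsym1} in place of \ref{Fanti} with only cosmetic changes to the key inequalities \eqref{eq:anti} and \eqref{eq:split}: instead of appealing to \ref{Fanti} to pass from $f^i(x_\lambda,{\bf u}_\lambda,\overline{\nabla u^i_\lambda})$ to $f^i(x,{\bf u}_\lambda,\nabla u^i_\lambda)$, one invokes the equality supplied by \ref{Fsym1} to replace $f^i$ at $(x_\lambda,{\bf u}_\lambda,\overline{\nabla u^i_\lambda})$ by its value at the reflected argument, after which the telescoping decomposition in \eqref{eq:split} and the resulting elliptic inequality \eqref{eq:EI} proceed exactly as before. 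Granting this, \cref{thm:cylinder} applied to ${\bf u}$ yields $u^i(x_1,x')\ge u^i(2\Lambda_0-x_1,x')$ together with $\partial u^i/\partial x_1<0$ on $\{x_1<\Lambda_0\}$, while applied to ${\bf u}_{\Lambda_0}$ it yields $u^i_{\Lambda_0}(x_1,x')\ge u^i_{\Lambda_0}(2\Lambda_0-x_1,x')$, which unwinds to $u^i(2\Lambda_0-x_1,x')\ge u^i(x_1,x')$ on the same set.

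Combining the two inequalities forces $u^i(x_1,x')=u^i(2\Lambda_0-x_1,x')$ throughout $\{x_1<\Lambda_0\}$, which is the desired symmetry, and the strict monotonicity $\partial u^i/\partial x_1<0$ on $\{x_1<\Lambda_0\}$ is inherited from the first application of \cref{thm:cylinder}. The main technical hurdle is the verification that the moving-plane machinery of \cref{thm:cylinder} actually runs under \ref{Fsym1}; the subtlety is that $f^i$ at $x_\lambda$ and at $x$ need not be comparable in the one-sided pointwise sense supplied by \ref{Fanti}, so one must check that the equality forced by the reflection symmetry combines cleanly with the Lipschitz estimates from \ref{Fzi}, \ref{Fzj}, \ref{Fp} to yield an elliptic inequality of the same shape as \eqref{eq:EI}, after which \cref{lem:SMPFMA} and the Serrin corner analysis from the proof of \cref{thm:cylinder} apply verbatim.
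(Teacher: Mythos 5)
The proposal is correct and takes essentially the same approach as the paper: apply \cref{thm:cylinder} (with \ref{Fsym1} substituted for \ref{Fanti} in the moving-plane inequalities) to both ${\bf u}$ and the reflected solution ${\bf u}_{\Lambda_0}$, and combine the two one-sided conclusions to get equality and the strict monotonicity. The technical caveat you flag at the end --- that one must check \ref{Fsym1} still drives \eqref{eq:anti} and \eqref{eq:split} --- is precisely the point the paper itself relegates to a parenthetical remark, so your treatment matches the paper's level of rigor.
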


\vskip 0.2in

Especially, when $\Omega$ is a ball, that is $C_H$ being a cylinder, if we substitute \ref{Fanti} to the symmetric ones \ref{Fsymall}, then by using \cref{thm:cylinder} respect to all directions in $\br^n$, we have immediately that

\begin{coro}\label{thm:cylinderball}
	Let $C_H=\Omega\times(-H, H)$ be a cylinder in $\br^n$, that is $\Omega=B_R$ be a arbitrary ball with radius $R$ in $\br^{n-1}$. Assume ${\bf f}$ satisfy \ref{Fc}, \ref{Fzi}, \ref{Fzj}, \ref{Fp}, \ref{Fsymall}. Let ${\bf u}=(u^1,\cdots,u^m)$ be a group of $[C^2(\overline{C_H})]^m$ strictly convex solutions to \eqref{eq:cylinder}, then each $u^i$ must be radially symmetric and strictly increasing respect to the axis crossing the center of $B_R$.
	
	More precisely, denote the center of $B_R$ as $x^*=\left((x^*)',x^*_n\right)\in\br^n$, and denote $r=|x'-(x^*)'|$, then for $i=1\dots,m$, each $u^i$ must be
	\begin{equation*}
	u^i(x)=u^i(r,x_n),\ x\in C_H,
	\end{equation*}
	moreover, 
	\begin{equation*}
	\frac{\partial u^i}{\partial r}(r,x_n)>0,\ x\in C_H.
	\end{equation*}
\end{coro}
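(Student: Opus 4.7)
The plan is to derive this corollary from the preceding symmetric refinement of \cref{thm:cylinder} (the theorem just above in which \ref{Fanti} is replaced by \ref{Fsym1}) applied in every horizontal direction. Since the cross-section $B_R$ is a ball centered at $(x^*)'$, the cylinder $C_H$ is symmetric with respect to every hyperplane through the central axis; combined with the orthogonal invariance of $\bf f$ furnished by \ref{Fsymall}, this lets us rotate the problem into standard position so that the symmetric theorem applies to every horizontal reflection, and the accumulated invariances will force $u^i$ to depend only on $r$ and $x_n$.

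Concretely, after translating so that $(x^*)' = 0$, I would fix any unit vector $\nu \in \br^{n-1}$ and choose $Q \in O(n-1)$ with $Q\nu = {\bf e_1}$; set $O := \operatorname{diag}(Q, 1) \in O(n)$ and $\tilde{u}^i(y) := u^i(O^{-1}y)$. A direct chain-rule computation gives $D^2 \tilde{u}^i(y) = O\, D^2 u^i(O^{-1}y)\, O^T$, so $\det D^2 \tilde{u}^i(y) = \det D^2 u^i(O^{-1}y)$, while \ref{Fsymall} says $f^i$ depends on $x$ and $p$ only through $|x|$ and $|p|$. Hence $\tilde{\bf u}$ solves \eqref{eq:cylinder} on the same cylinder with the same right-hand side, and the hypotheses \ref{Fc}, \ref{Fzi}, \ref{Fzj}, \ref{Fp} are preserved verbatim while \ref{Fsym1} holds with respect to $\{y_1 = 0\}$, so that the critical value for the rotated problem is $\Lambda_0 = 0$. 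Applying the symmetric version of \cref{thm:cylinder} to $\tilde{\bf u}$ then yields $\tilde{u}^i(y) = \tilde{u}^i(|y_1|, y_2, \dots, y_n)$ together with $\partial_{y_1}\tilde{u}^i(y) < 0$ on $\{y_1 < 0\}$.

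Translating back, this says each $u^i$ is invariant under the reflection of $x' - (x^*)'$ in the hyperplane orthogonal to $\nu$. Since $\nu$ was an arbitrary horizontal unit vector and such reflections generate the full orthogonal group $O(n-1)$ acting on the $x'$ variables, $u^i$ is rotationally invariant about the central axis, whence $u^i(x) = u^i(r, x_n)$. For the strict monotonicity, given any $x \in C_H$ with $r := |x' - (x^*)'| > 0$, choose $\nu := (x' - (x^*)')/r$; in the corresponding rotated coordinates $x$ is mapped to a point with $y_1 = r > 0$, and the strict inequality on $\{y_1 < 0\}$ together with the reflection symmetry gives $\partial_{y_1}\tilde{u}^i > 0$ there, which the chain rule converts to $\tfrac{\partial u^i}{\partial r}(r, x_n) > 0$.

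The main bookkeeping point is verifying that every hypothesis of the symmetric refinement of \cref{thm:cylinder} transfers to the rotated problem, which is where the full orthogonal invariance \ref{Fsymall} is essential; without it one could not identify the rotated right-hand side with the original $\bf f$. Beyond this, no new moving-plane analysis is required, and the corollary is truly immediate from the previous theorem once the rotational reduction is set up.
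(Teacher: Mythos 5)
Your proposal is correct and follows exactly the route the paper intends: the corollary is obtained by applying the symmetric refinement of \cref{thm:cylinder} in every horizontal direction, with \ref{Fsymall} guaranteeing that the rotated system coincides with the original one. Your write-up simply fills in the rotation/chain-rule bookkeeping and the observation that reflections through all hyperplanes containing the axis generate the rotational invariance, which the paper leaves implicit by declaring the result ``immediate.''
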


\vskip 0.2in
\subsection{Proof of \cref{thm:cylinder}}

We begin to prove the theorem.

\newenvironment{proof3}{{\noindent\bf Proof of \cref{thm:cylinder}.}}{\hfill $\square$\par}
\begin{proof3}
	
	{\bf Step 1}: There exists a real number $\lambda<\min\{\Lambda_0,\Lambda_1,\Lambda_2\}$ such that $\forall\ \mu<\lambda$, $\forall i=1,\dots,m$, $\forall x\in\Sigma_\mu$, $U^i_\mu(x)<0$.
	
    For $i=1,\dots,m$, noticing that  \cref{rmk:basicineq} still holds for $u^i$ here. We firstly focus on $x_H:=(\lambda_0,x_0'',H)$. For $s:={\bf e_1}-{\bf e_n}$, at $x_H$, we can locally regarded as two planes $\mleft\{\rho\equiv x_1-\lambda_0=0 \mright\}$ and $\mleft\{\sigma\equiv x_n-H=0\mright\}$ intersecting with each other, hence $a_{ij}\rho_i\sigma_j=0$, by \cref{lemmaS}, we have either 
	\begin{equation}\label{eq:cyl1}
	\frac{\partial u^i}{\partial s}(x_H)=\frac{\partial u^i}{\partial x_1}(x_H)-\frac{\partial u^i}{\partial x_n}(x_H)<0,
	\end{equation}
	or
	\begin{equation}\label{eq:cyl2}
	\frac{\partial^2 u^i}{\partial s^2}(x_H)=\frac{\partial^2 u^i}{\partial x_1^2}(x_H)-2\frac{\partial^2 u^i}{\partial x_1\partial x_n}(x_H)+\frac{\partial^2 u^i}{\partial x_n^2}(x_H)<0. 
	\end{equation}
	However at $x_H$, by boundary conditions, we have $$\frac{\partial u^i}{\partial x_1}(x_H)=0, \frac{\partial^2 u^i}{\partial x_1^2}(x_H)=0,\frac{\partial u^i}{\partial x_n}(x_H)=0, \frac{\partial^2 u^i}{\partial x_n^2}(x_H)=0,$$ which showing that \eqref{eq:cyl1} not hold, hence by \eqref{eq:cyl2},
	\begin{equation}\label{eq:cyl3}
		\frac{\partial^2 u^i}{\partial x_1\partial x_n}(x_H)>0.
	\end{equation}
	Now, by the $C^2$ continuity of $u^i$ up to the boundary, we can choose $\epsilon_1>0$, such that for any $x\in C_H, |x-x_H|<\epsilon_1$ close to $x_H$, we have $\frac{\partial^2 u^i}{\partial x_1\partial x_n}(x)>0$. Denote $x_u$ be the point of $x$ directing on the upper surface $C_u$, then by \eqref{eq:cyl3},
	$$\frac{\partial u^i}{\partial x_1}(x_u)-\frac{\partial u^i}{\partial x_1}(x)=\int_{x_n}^{(x_u)_n}\frac{\partial^2 u^i}{\partial x_1\partial x_n}(x',s)ds>0,$$
	noticing that $\frac{\partial u^i}{\partial x_1}(x_u)=0$, we have $\frac{\partial u^i}{\partial x_1}(x)<0$.
	Similarly, for $x_{-H}:=(\lambda_0,x_0'',-H)$, consider $s={\bf e_1}+{\bf e_n}$ with the same argument, there exists $\epsilon_2>0$, such that for any $x\in C_H, |x-x_{-H}|<\epsilon_2$, we have $\frac{\partial u^i}{\partial x_1}(x)<0$.
	
	Next, we consider the domain $C_H':=C_H\setminus \left(B_{\epsilon_1}(x_H)\cup B_{\epsilon_2}(x_{-H})\right)$. For any $x\in\partial C_H'\cap\{x_1=\lambda_0\}$, we have $\frac{\partial u^i}{\partial x_1}(x)<0$ by \cref{rmk:basicineq}, hence there exists $\epsilon_x>0$, such that for any $y\in C_H, |x-y|<\epsilon_x$, we have $\frac{\partial u^i}{\partial x_1}(y)<0$. Noticing that $C_H'\cap\{x_1=\lambda_0\}$ is compact, there must be a finite cover by $\{B_{\epsilon_x}(x)\}$, denote as$\{B_{\epsilon_{x_i}}(x_i)\}_{i=1}^K$.
	
	Denote $$C_\epsilon:= C_H\cap \left(B_{\epsilon_1}(x_H)\cup B_{\epsilon_2}(x_{-H})\bigcup\limits_{i=1}^K B_{\epsilon_{x_i}}(x_i)\right),$$ we have $$\frac{\partial u^i}{\partial x_1}(x)<0,\ x\in C_\epsilon,\ \forall i=1,\dots,m,$$ hence when $\lambda$ sufficiently close to $-\lambda_0$, such that $\Sigma_\lambda\cup\Sigma_\lambda^\lambda\subset C_\epsilon$, then for any $\mu\in(-R,\lambda)$, we have $$U^i_\mu(x)=\int_{x_1}^{2\mu-x_1}\frac{\partial u^i}{\partial x_1}(s,x')ds<0,\ x\in \Sigma_\mu,$$ which accomplishes our first step.
	
	{\bf Step 2}: Let $\Lambda:=\sup\{\lambda<\Lambda_0\ |\ U^i_\mu(x)<0,\ \forall x\in\Sigma_\mu,\ \forall\ i=1,\dots,m,\ \forall\ \mu<\lambda\}$, we need to prove $\Lambda=\Lambda_0$.
	
	If not, that is $\Lambda<\Lambda_0$, then $U^i_\Lambda\leq 0$ by the continuity of $u^i$, further more, $U^i_\mu\leq 0, \forall \mu\leq\Lambda$, it's clear to see by covering argument that $\frac{\partial u^i}{\partial x_1}(x)\leq0, \forall x\in\Sigma_\Lambda$. Noting that \cref{lem:SMPFMA} still holds in $C$, hence for all $i=1,\dots, m$, $U^i_\Lambda<0$ in $\Sigma_\Lambda$ and $\frac{\partial U^i_\Lambda}{\partial x_1}>0$, $\frac{\partial u^i}{\partial x_1}<0$ on $T_\Lambda$.
	
	We consider a sequence of $\lambda_k\in(\Lambda,\Lambda_0)$ converging to $\Lambda$ and a sequence of  $x_k\in\Sigma_{\lambda_k}$ such that $U^i_{\lambda_k}(x_k)\geq 0$ for a specific $i$ (at least one of them verifies this for infinitely many $k$'s). By boundary conditions and \cref{rmk:basicineq}, we have $\left.U^i_{\lambda_k}\right|_{\partial\Sigma_{\lambda_k}}\leq0$, hence we can substitute $\{x_k\}$ to be lying in the interior of $\Sigma_{\lambda_k}$, such that $$U^i_{\lambda_k}(x_k)=\max\limits_{\overline{\Sigma_{\lambda_k}}}U^i_{\lambda_k}\geq0,$$ and hence $\nabla U^i_{\lambda_k}(x_k)=0.$ By passing to a subsequence if necessary, due to the boundedness of $C_H$, we have $x_k\to x^*\in\overline{\Sigma_\Lambda}$, and
	\begin{equation}\label{eq:cylmax} U^i_{\Lambda}(x^*)\geq 0,
	\end{equation} 
	\begin{equation}\label{eq:cylgrad} 
	\nabla U^i_{\Lambda}(x^*)=0. 
	\end{equation} 
	
	Noting that $\left.U^i_\Lambda\right|_{\Sigma_\Lambda}<0$ shows that  $x^*\in\partial\Sigma_\Lambda\subset\partial C_H\cup T_\Lambda$, while $\left.\frac{\partial U_\Lambda^i}{\partial x_1}\right|_{T_\Lambda}>0$ shows that $x^*\in\partial C_H$. Due to the geometry of $C_H$, there are still two cases that could occur:
	
	\begin{enumerate}
	\renewcommand{\theenumi}{\textbf{Case \arabic{enumi}}}
	\renewcommand{\labelenumi}{\theenumi}
 
    \item 
    $x^*\in\partial C_H\setminus\overline{T_\Lambda}$. 
		
	In this case, boundary conditions and $U^i_{\Lambda}(x^*)\geq 0$ shows that $x^*\in\partial\Sigma_\Lambda\setminus\mleft\{x\in\partial C_H\ \middle|\ x^\Lambda\in C_H\mright\}$, then we have $\left((x^*)^\Lambda\right)\in\partial C_H$. In fact, $x^*$ is on the position where the situation (I) occurs. Hence the unit interior normal vectors of these two points must be coincided with each other and both be orthogonal to  ${\bf e_1}$, without lost of generality, we denote them as ${\bf e_n}$. 
		
	Boundary condition shows that $U^i_\Lambda(x^*)=0$, noting that \eqref{eq:EIB} also holds on $\Sigma_\Lambda$, then \cref{lem:SMP-HL} ensures that
	\begin{equation*}
		\frac{\partial U^i_\Lambda}{\partial x_n}(x^*)>0,
	\end{equation*}
	which is contradictory to \eqref{eq:cylgrad}.
		
	\item
    $x^*\in\partial C_H\cap\overline{T_\Lambda}$.
		
	In this case, denote the unit outer normal vector of $x^*$ as $\nu$, suppose that $\nu_1<0$, then $U^i_\Lambda(x^*)=0$. Using \cref{lem:SMPFMA} on \eqref{eq:EIB}, which holds over $\Sigma_\Lambda$ in this case, shows that $\frac{\partial U^i_\Lambda}{\partial x_1}(x^*)>0$, which leads a contradiction to \eqref{eq:bddgrad}. Hence $\nu_1=0$, which means that $x^*$ is on the position where the situation (II) happens.
		
	Without lost of generality, we assume $\nu=-{\bf e_n}$. Choosing $s:=-{\bf e_1}+{\bf e_n}$ as the non-tangent direction entering $\Sigma_\Lambda$, noting that \eqref{eq:EIB} still holds over $\Sigma_\Lambda$ now, and \eqref{eq:det} together with \eqref{eq:aij} shows that $a_{1j}=a_{j1}=0,\forall j=2,\dots,n$. Now since $T_\Lambda$ is tangent with $\partial C_H$ at $x^*$,  it can be locally regarded as $\mleft\{\rho\equiv x_1-\Lambda=0 \mright\}$ intersect with $\mleft\{\sigma\equiv x_2+\dots+x_n=0\mright\}$ at $x^*$, hence $a_{ij}\rho_i\sigma_j=0$ at $x^*$, \cref{lemmaS} shows that either
	\begin{equation}\label{eq:cyl4}
		\frac{\partial U^i_\Lambda}{\partial s}(x^*)=-\frac{\partial U^i_\Lambda}{\partial x_1}(x^*)+\frac{\partial U^i_\Lambda}{\partial x_n}(x^*)<0,
	\end{equation}
	or
	\begin{equation}\label{eq:cyl5}
		\frac{\partial^2 U^i_\Lambda}{\partial s^2}(x^*)=\frac{\partial^2 U^i_\Lambda}{\partial x_1^2}(x^*)-2\frac{\partial^2 U^i_\Lambda}{\partial x_1\partial x_n}(x^*)+\frac{\partial^2 U^i_\Lambda}{\partial x_n^2}(x^*)<0. 
	\end{equation}
		
	Noting that \eqref{nablaonT} and \eqref{HessainonT}, together with \eqref{eq:cylgrad}, \eqref{eq:cyl4} and \eqref{eq:cyl5}, shows that $\frac{\partial^2 U^i_\Lambda}{\partial x_1\partial x_n}(x^*)>0$.
		
	Consider the segment $I_k$ in the ${\bf -e_1}$ directing from $x_k$ to $y_k\in T_\Lambda$. Then, due to continuity, for sufficiently large $k$ such that $\frac{\partial^2 U^i_\Lambda}{\partial x_1\partial x_n}>0$ holds in $I_k$, we have 
	$$ 0=\frac{\partial U^i_\Lambda}{\partial x_n}(x_k)=	\frac{\partial U^i_\Lambda}{\partial x_n}(y_k)+\int_{y_{k,1}}^{x_{k,1}}\frac{\partial^2 U^i_\Lambda}{\partial x_1\partial x_n}(s,x')ds>0$$
	which is a contradiction.
\end{enumerate}
	
In summary, the above two cases would not happen, and hence $\Lambda=\Lambda_0$.
	
{\bf Step 3}: Conclusions.
Now $\Lambda=\Lambda_0$, then $U^i_\Lambda\leq 0$ by the continuity of $u^i$, and $\frac{\partial u^i}{\partial x_1}<0$ in $\{x\in \Omega\ |\ x_1<\Lambda_0\}$ by \cref{lem:SMPFMA} on $\Sigma_\lambda,\ \forall\lambda<\Lambda$ with a covering argument.

For the second part assertion in the theorem, noting that \eqref{eq:EIB} also holds on $\Sigma_\Lambda$,  hence $\frac{\partial u^i}{\partial x_1}(\Lambda_0,x')=0$ for some $x\in \{x_1=\Lambda_0\}$ and \cref{lem:SMPFMA} shows that $U^i_\Lambda\equiv0$ in $\Sigma_\Lambda$, and then we finish the proof of the whole theorem.

\end{proof3}

\section{Application}\label{sec:app}
\noindent

We consider the following elliptic system coupled by Monge-Amp\`ere equations: 
\begin{equation}\label{app1}
\left\{\begin{array}{rl}
&\operatorname{det} D^2u^1=(-u^2)^{\alpha}\text{ in }\Omega,\\
&\operatorname{det} D^2u^2=(-u^1)^{\beta}\text{ in }\Omega,\\
&u^1<0, u^2<0\hspace{0.85cm} \text{ in }\Omega,\\
&u^1=u^2=0\hspace{1.2cm} \text{ on }\partial\Omega.
\end{array}
\right.
\end{equation} 

The existence of \eqref{app1} have been obtained in \cite{zhang_power-type_2015}.
\begin{thm}
\label{thm:MAZQ}
Let $\Omega=B_1(0)\subset\br^n$,$\alpha>0,\beta>0$, then
\begin{enumerate}[label=$(\roman{enumi})$]
\item if $\alpha\beta\neq n^2$, \eqref{app1} have at least one radially symmetric convex solution;
\item if $\alpha\beta< n^2$, \eqref{app1} have exactly one radially symmetric convex solution;
\item if $\alpha\beta= n^2$, \eqref{app1} have no radially symmetric convex solution.
\end{enumerate}
\end{thm}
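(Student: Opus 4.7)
The plan is to reduce the system to coupled ODEs via the radial ansatz and then treat the three parts separately, with the critical exponent $\alpha\beta=n^2$ acting as a Pohozaev threshold. Writing $u^i(x)=U_i(r)$ with $r=|x|$, the Monge--Amp\`ere operator becomes $\det D^2 u^i=U_i''(r)\bigl(U_i'(r)/r\bigr)^{n-1}$, and after multiplying by $nr^{n-1}$ and integrating once (with $U_i'(0)=0$ forced by smoothness at the origin), the system collapses to the coupled integral equations
\begin{equation*}
\bigl(U_i'(r)\bigr)^n=n\int_0^r s^{n-1}\bigl(-U_j(s)\bigr)^{e_i}\,ds,\quad U_i(1)=0,\ U_i<0\text{ on }[0,1),
\end{equation*}
where $(i,j,e_i)\in\{(1,2,\alpha),(2,1,\beta)\}$.

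For part (i), I would apply a shooting argument in the depth parameters $c_i:=-U_i(0)>0$. Standard ODE theory produces a unique smooth local IVP solution with continuous dependence on $(c_1,c_2)$; let $R_i(c_1,c_2)$ denote the first radius at which $U_i$ reaches zero. The key structural tool is the scaling invariance $V_i(r)=k^{a_i}U_i(k^b r)$ with $a_1=n+\alpha$, $a_2=n+\beta$, and $b=(\alpha\beta-n^2)/(2n)$; a direct computation shows this maps IVP solutions to IVP solutions and rescales each vanishing radius by $k^{-b}$. When $\alpha\beta\neq n^2$ we have $b\neq 0$, so for each ratio $t=c_2/c_1>0$ one may uniquely rescale to enforce $R_1=1$, reducing the problem to the one-parameter shooting equation $R_2(t)=1$. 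An asymptotic analysis of $R_2(t)$ as $t\to 0^+$ and $t\to+\infty$, combined with the intermediate value theorem (or a topological degree argument in the supercritical regime $\alpha\beta>n^2$), then yields at least one $t$ solving it.

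For part (ii), uniqueness in the subcritical regime $\alpha\beta<n^2$ follows from a monotone comparison. If $(U_i)$ and $(V_i)$ are two radial convex solutions with, without loss of generality, $U_1(0)\leq V_1(0)$, iterating the integral equations with the monotonicity of $t\mapsto t^{e_i}$ propagates $U_i\leq V_i$ on $[0,1]$. The scaling with $b<0$ strictly expands the vanishing radius of the larger datum, so any strict inequality at $r=0$ would be incompatible with the common boundary condition at $r=1$, forcing equality of initial data and hence of the solutions. The argument breaks in the critical case because $b=0$ renders the scaling neutral and permits nontrivial reparametrisations.

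For part (iii), the critical case $\alpha\beta=n^2$ is ruled out by a Pohozaev-type identity. Multiply $\det D^2 u^1=(-u^2)^\alpha$ by $\mu_1\, x\cdot\nabla u^1$ and $\det D^2 u^2=(-u^1)^\beta$ by $\mu_2\, x\cdot\nabla u^2$, integrate over $B_1$, and apply the Monge--Amp\`ere integration-by-parts formula converting $\int_\Omega x\cdot\nabla u^i\det D^2 u^i$ into a boundary term involving $(u^i_\nu)^{n+1}$ plus a multiple of $\int_\Omega u^i\det D^2 u^i$. Choose $(\mu_1,\mu_2)$ so that the resulting cross-coupling interior terms cancel precisely when $\alpha\beta=n^2$; all interior contributions then telescope to zero, leaving only a strictly positive boundary integral (positivity of $u^i_\nu$ on $\partial B_1$ coming from Hopf's lemma via \cref{rmk:basicineq}), which is the desired contradiction.

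The main obstacle is the coupled Pohozaev computation: the nonlinearity of $\det D^2$ makes every integration by parts delicate, and the multiplier pair $(\mu_1,\mu_2)$ must be selected so that the interior interaction terms telescope exactly in the critical ratio $\alpha\beta=n^2$. A secondary difficulty is the supercritical shooting analysis $\alpha\beta>n^2$, where monotonicity of $t\mapsto R_2(t)$ need not hold and one may need a genuine degree argument together with uniform a priori bounds on the IVP to prevent the shooting map from escaping to infinity before hitting the target.
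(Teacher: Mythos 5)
First, a point of reference: the paper itself offers no proof of this theorem --- it is quoted from \cite{zhang_power-type_2015} --- so there is no internal argument to compare yours against. Your radial reduction, the formula $\det D^2u^i=U_i''\bigl(U_i'/r\bigr)^{n-1}$, the integrated ODEs, and the scaling $V_i(r)=k^{a_i}U_i(k^br)$ with $a_1=n+\alpha$, $a_2=n+\beta$, $b=(\alpha\beta-n^2)/(2n)$ are all correct, and shooting combined with this scaling is a viable route to part (i); the required asymptotics of the vanishing radii do hold and follow from elementary bounds on the integral equations. For part (ii), your comparison step is wrong as stated: knowing $U_1(0)\leq V_1(0)$ controls $U_2'$ versus $V_2'$ through the integral equation but says nothing about $U_2(0)$ versus $V_2(0)$, so the inequality does not ``propagate.'' Uniqueness for $\alpha\beta<n^2$ is nevertheless recoverable by the standard sweeping argument for the composed solution operator, which is order-preserving and positively homogeneous of degree $\alpha\beta/n^2<1$.

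The genuine flaw is part (iii). The threshold $\alpha\beta=n^2$ is an eigenvalue-type criticality, not a Pohozaev one: it is precisely the exponent at which the system is invariant under the amplitude scaling $(u^1,u^2)\mapsto(k^{n+\alpha}u^1,k^{n+\beta}u^2)$ but \emph{not} under domain dilation. Solutions therefore do exist on balls of the correct radius: for $n=\alpha=\beta=1$ the pair $u^1=u^2=-\cos x$ solves the critical system on $(-\pi/2,\pi/2)$, and in general your own shooting framework produces a radius $R_*$ at which $R_1=R_2=R_*$. Hence nonexistence on $B_1$ is a quantitative statement about the unit radius, and no scale-covariant integral identity that ``telescopes to zero in the interior leaving a positive boundary term'' can prove it --- such an identity would apply verbatim to $B_{R_*}$ and wrongly exclude the solution that exists there. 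What does work is elementary and shows exactly where the radius $1$ enters: from
\begin{equation*}
\bigl(U_1'(r)\bigr)^n=n\int_0^r s^{n-1}\bigl(-U_2(s)\bigr)^{\alpha}\,ds\leq \|U_2\|_\infty^{\alpha}\,r^n,
\end{equation*}
one gets $\|U_1\|_\infty=-U_1(0)=\int_0^1U_1'(r)\,dr\leq\tfrac12\|U_2\|_\infty^{\alpha/n}$, and symmetrically $\|U_2\|_\infty\leq\tfrac12\|U_1\|_\infty^{\beta/n}$; combining these and using $\alpha\beta=n^2$ yields $\|U_1\|_\infty\leq 2^{-(1+\alpha/n)}\|U_1\|_\infty$, impossible for a nontrivial solution. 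The factor $\tfrac12=\int_0^1 r\,dr$ is the quantitative input from the unit radius that a Pohozaev identity cannot supply.
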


We apply \cref{thm:ball} to get the following uniqueness theorem.

\begin{thm}
Let $\alpha>0$, $\beta>0$, $\alpha\beta<n^2$ and  $\Omega=\{x\in \mathbb{R}^n:|x|<1\}$ with $n\geq 2$. Then \eqref{app1} admits unique convex solutions ${\bf u}=(u^1,u^2)$, which must be radially symmetric and strictly increasing with respect to $0$.
\end{thm}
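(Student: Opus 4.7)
The plan is to combine Corollary \ref{thm:ball} with Theorem \ref{thm:MAZQ}(ii) in the following way: the former forces any convex solution of \eqref{app1} to be radially symmetric about the origin with $du^i/dr>0$, and the latter then supplies uniqueness within the radial class when $\alpha\beta<n^2$.

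The first step is to verify that ${\bf f}=(f^1,f^2)$ with $f^1(x,{\bf z},p)=(-z^2)^\alpha$ and $f^2(x,{\bf z},p)=(-z^1)^\beta$ meets the hypotheses \ref{F0}, \ref{Fzi}, \ref{Fzj}, \ref{Fp}, \ref{Fsymall} of Corollary \ref{thm:ball}. Since each $f^i$ depends neither on $x$, nor on $p$, nor on its own $z^i$-slot, conditions \ref{Fzi} (with $f^{i,1}\equiv 0$, $f^{i,2}=f^i$), \ref{Fp}, and \ref{Fsymall} are immediate. Condition \ref{Fzj} reduces to the elementary observation that $z\mapsto(-z)^\gamma$ is strictly decreasing on $(-\infty,0)$ for $\gamma>0$. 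Positivity \ref{F0} holds along any solution of \eqref{app1} because $u^1,u^2<0$ in $\Omega$; this is precisely the way in which \ref{F0} enters the proof of Corollary \ref{thm:ball}, namely to ensure positive definiteness of the averaged Hessian ${\bf A}^i$ from \eqref{eq:aij}.

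With these hypotheses verified, Corollary \ref{thm:ball} applied to any strictly convex $[C^2(\overline{\Omega})]^2$ solution of \eqref{app1} yields radial symmetry about the origin and strict monotonicity $du^i/dr>0$ on $\Omega$. Theorem \ref{thm:MAZQ}(ii) then asserts that under $\alpha\beta<n^2$ there is exactly one radially symmetric convex solution, which combined with the preceding step gives exactly one convex solution, completing the proof.

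The main, and really the only, subtlety is that \ref{F0} is stated pointwise on all of $\Omega\times\br^m\times\br^n$, whereas $(-z^j)^\alpha$ is not defined, let alone positive, for $z^j\geq 0$ with non-integer exponent. This is handled by extending $z\mapsto (-z)^\gamma$ to a strictly positive, locally Lipschitz function on all of $\br$ (for instance by replacing it with $((-z)_+ + \varepsilon)^\gamma$ outside a neighborhood of the negative half-line): any such extension agrees with the original on the range of any convex solution of \eqref{app1}, so neither the equation nor the symmetry conclusion is affected. Equivalently, one may simply observe that the proof of Corollary \ref{thm:ball} only invokes \ref{F0} at the values $(x,{\bf u}(x),\nabla u^i(x))$ actually attained by the solution.
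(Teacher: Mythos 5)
Your proposal is correct and follows exactly the route the paper takes: apply Corollary \ref{thm:ball} to force radial symmetry and monotonicity of every convex solution, then invoke Theorem \ref{thm:MAZQ}(ii) for uniqueness within the radial class. The only difference is that you spell out the verification of \ref{F0}--\ref{Fsymall} (including the extension of $(-z)^\gamma$ off the negative half-line), which the paper leaves implicit; that is a welcome addition but not a different argument.
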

\begin{proof}
On the one hand, according to \cref{thm:MAZQ}, \eqref{app1} has a unique radial convex solution ${\bf u}=(u^1,u^2)$. On the other hand, since \eqref{app1} satisfies the condition of \cref{thm:ball}, it follows that all convex solutions ${\bf u}=(u^1,u^2)$ to system \eqref{app1} must be radially symmetric with respect to $0$. Therefore we have showed that there is only one group of convex solutions ${\bf u}=(u^1,u^2)$ to system \eqref{app1}, which must be radial and strictly increasing.
\end{proof}


\bibliographystyle{abbrv}
\bibliography{ref}

\end{document}